\DeclareMathOperator{\Ass}{Ass}
\DeclareMathOperator{\Spec}{Spec}
\DeclareMathOperator{\Min}{Min}
\DeclareMathOperator{\height}{ht}
\DeclareMathOperator{\dep}{depth}
\theoremstyle{plain}
	\newtheorem{theorem}{Theorem}
	\newtheorem{lemma}[theorem]{Lemma}
    \newtheorem{corollary}[theorem]{Corollary}
    \newtheorem{proposition}[theorem]{Proposition}
\theoremstyle{definition}
	\newtheorem{remark}[theorem]{Remark}
    \newtheorem{example}[theorem]{Example}
\theoremstyle{remark}
    \newtheorem{defn}[theorem]{Definition}
\numberwithin{theorem}{section}
\newcommand{\ds}{\displaystyle}
\title{Characterization of Completions of Noncatenary Local Domains and Noncatenary Local UFDs}
\author{Chloe I. Avery, Caitlyn Booms, Timothy M. Kostolansky, S. Loepp, Alex Semendinger}
\date{\today}
\begin{document}
\maketitle

\doublespacing

\begin{abstract}
We find necessary and sufficient conditions for a complete local ring to be the completion of a noncatenary local (Noetherian) domain, as well as necessary and sufficient conditions for it to be the completion of a noncatenary local (Noetherian) unique factorization domain. We use our first result to demonstrate a large class of quasi-excellent domains that are not excellent, as well as a large class of catenary domains that are not universally catenary.  We use our second result to find a larger class of noncatenary local UFDs than was previously known, and we show that there is no bound on how noncatenary a UFD can be.  
\end{abstract}

\section{Introduction}

A ring $A$ is called \textit{catenary} if, for all pairs of prime ideals $P \subsetneq Q$ of $A$, all saturated chains of prime ideals between $P$ and $Q$ have the same length. Otherwise, it is called \textit{noncatenary}. For some time it was thought likely that noncatenary Noetherian rings did not exist. This was proven incorrect by Nagata in 1956, when he constructed a family of noncatenary local (Noetherian) integral domains in \cite{nagata1956chain}. Roughly speaking, this construction is accomplished by ``gluing together'' maximal ideals of different heights of a semilocal domain to obtain a noncatenary local domain. Nagata's result was later extended by Heitmann in \cite{heitmann1979examples}, where he shows that there is no finite bound on the ``noncatenarity'' of a local domain, in the sense that the difference in length between the longest and shortest saturated chains of prime ideals from $(0)$ to the maximal ideal can be made arbitrarily large (in fact, Heitmann's result is considerably stronger than this). It was then conjectured that all integrally closed domains are catenary, which Ogoma disproved in 1980 in \cite{ogoma} by constructing a noncatenary integrally closed domain. Furthermore, it was not until 1993 that the existence of a noncatenary Noetherian unique factorization domain was established by Heitmann in \cite{heitmann1993characterization}. We believe this is the only example of a noncatenary Noetherian UFD currently in the literature.

This paper contains two main results: we characterize the completions of noncatenary local domains and we characterize the completions of noncatenary local UFDs. The former is done essentially by ``gluing together'' associated prime ideals of a nonequidimensional complete local ring, an approach that is different than the previous methods of ``gluing together'' maximal ideals. We also use this construction to find a large class of rings that are quasi-excellent but not excellent, as well as a class of rings that are catenary but not universally catenary. Our second main result is a generalization of Theorem 10 in \cite{heitmann1993characterization}, and allows us to find many examples of noncatenary local UFDs. Our constructions also allow us to prove in a new way that there is no finite bound on the noncatenarity of a local domain, and we in fact extend this result to UFDs as well.


Throughout the paper, whenever we say a ring is \textit{local}, we mean that it is Noetherian and has a unique maximal ideal. We denote a local ring $A$ with unique maximal ideal $M$ by $(A,M)$. Whenever we refer to the completion of a local ring $(A,M)$, we mean the completion of $A$ with respect to $M$, and we denote this by $\widehat{A}$. Finally, we use $\height(I)$ to denote the height of the ideal $I$ and we say that the \textit{length} of a chain of prime ideals of the form $P_0 \subsetneq \dots \subsetneq P_{n}$ is $n$.

\section{Characterizing Completions of Noncatenary Local Domains}

\subsection{Background}

We first cite a result which will be important for both of our main theorems:

\begin{theorem}\textnormal{(\hspace{1sp}\cite[Theorem 31.6]{matsumura1989commutative})} \label{matsumura}
Let $A$ be a local ring such that $\widehat{A}$ is equidimensional. Then $A$ is universally catenary.
\end{theorem}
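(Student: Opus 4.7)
The plan is to reduce the statement to Ratliff's criterion for universal catenarity, which says that a local ring $A$ is universally catenary if and only if for every $P \in \Spec A$, the quotient $A/P$ is formally equidimensional (i.e., $\widehat{A/P}$ is equidimensional). Assuming this criterion, the problem becomes: given that $\widehat{A}$ is equidimensional, show that $\widehat{A/P} \cong \widehat{A}/P\widehat{A}$ is equidimensional for every prime $P$ of $A$.

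To do this, I would fix $P \in \Spec A$ and let $Q$ be an arbitrary minimal prime of $P\widehat{A}$, so that $Q/P\widehat{A}$ is an arbitrary minimal prime of $\widehat{A}/P\widehat{A}$. Since $A \to \widehat{A}$ is faithfully flat, going-down holds, and in particular $Q \cap A = P$. The flat-extension dimension formula gives
\[
\height Q \;=\; \height P + \height(Q/P\widehat{A}) \;=\; \height P,
\]
the last equality holding because $Q$ is minimal over $P\widehat{A}$. Now $\widehat{A}$ is complete, hence catenary, and by assumption equidimensional; these two properties together imply $\dim \widehat{A}/Q' + \height Q' = \dim \widehat{A}$ for every prime $Q'$ of $\widehat{A}$. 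Applying this with $Q' = Q$ and using $\dim \widehat{A} = \dim A$ yields
\[
\dim(\widehat{A}/Q) \;=\; \dim A - \height P \;=\; \dim(A/P).
\]
Since this value is independent of the choice of minimal prime $Q$ over $P\widehat{A}$, the ring $\widehat{A}/P\widehat{A}$ is equidimensional, and invoking Ratliff's criterion completes the argument.

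The main obstacle is of course the input criterion of Ratliff itself, which is genuinely deep: one must analyze the behavior of formal fibers under polynomial extensions to show that formal equidimensionality of every $A/P$ forces catenarity of every finitely generated $A$-algebra. Granted that ingredient, the remaining content is essentially the flat dimension formula combined with the fact that complete local rings are catenary — both standard tools — so no further serious technicalities are anticipated in the reduction.
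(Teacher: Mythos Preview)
The paper does not prove this theorem at all; it is simply quoted as \cite[Theorem 31.6]{matsumura1989commutative} and used as a black box (specifically, its contrapositive is invoked to deduce that completions of noncatenary rings are nonequidimensional). So there is no ``paper's own proof'' to compare against.

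Your sketch is correct and is in fact the standard argument behind Matsumura's Theorem 31.6: reduce to Ratliff's criterion (Matsumura's Theorem 31.7, which the present paper also cites in Example~\ref{catDomainExample}) and then verify equidimensionality of each $\widehat{A/P}$ using flatness of $A\to\widehat A$ together with catenarity and equidimensionality of $\widehat A$. One small remark: the final equality $\dim A-\height P=\dim(A/P)$ is not obvious a priori (it presupposes exactly the kind of height additivity you are trying to establish), but you do not actually need it. What matters for equidimensionality of $\widehat A/P\widehat A$ is only that $\dim(\widehat A/Q)=\dim\widehat A-\height P$ is independent of the choice of minimal prime $Q$ over $P\widehat A$, and you have shown that. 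The equality with $\dim(A/P)$ then follows a posteriori, since $\dim(A/P)=\dim(\widehat A/P\widehat A)$ equals the common value $\dim(\widehat A/Q)$.
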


In particular, we will use the contrapositive: if $A$ is not universally catenary, then $\widehat{A}$ is nonequidimensional. This provides a simple necessary condition for a complete local ring $T$ to be the completion of a noncatenary local ring. 

The following theorem from \cite{lech1986method} provides necessary and sufficient conditions for a complete local ring to be the completion of a local domain. These conditions will be necessary for Theorem \ref{iff}, where we characterize completions of noncatenary local domains.

\begin{theorem}\textnormal{(\hspace{1sp}\cite[Theorem 1]{lech1986method})}\label{lech}
Let $(T,M)$ be a complete local ring. Then $T$ is the completion of a local domain if and only if the following conditions hold:
\begin{enumerate}
\item[(i)] No integer of $T$ is a zero divisor of $T$, and
\item[(ii)] Unless equal to $(0)$, $M\notin \Ass T$.
\end{enumerate} 
\end{theorem}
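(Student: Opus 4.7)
For the necessity direction, suppose $A$ is a local domain with completion $T$. The natural map $A \hookrightarrow T$ is faithfully flat, and flatness preserves non-zero-divisors. Since every nonzero element of the domain $A$ is a non-zero-divisor, every nonzero integer of $A$---and hence of $T$---is a non-zero-divisor in $T$, giving (i). For (ii), assume $M \neq (0)$ and pick any nonzero $a \in M \cap A$; by flatness $a$ is a non-zero-divisor of $T$, so $at \neq 0$ for every $0 \neq t \in T$. This rules out $M = \operatorname{ann}_T(t)$ for any nonzero $t$, and so $M \notin \Ass T$.

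For sufficiency, the plan is to build a local domain $A \subseteq T$ whose $M$-adic completion recovers $T$ via a transfinite construction. Begin with the prime subring $R_0$ of $T$; condition (i) guarantees $R_0$ is a domain ($\zz$ in characteristic zero, or $\mathbb{F}_p$ otherwise). Then inductively form an ascending chain $R_0 \subseteq R_1 \subseteq \cdots$ of subrings whose union $A$ is engineered to satisfy three properties: first, the induced map $A \to T/M^2$ is surjective, which, via the standard criterion, implies $\widehat{A} \twoheadrightarrow T$; second, $A$ is Noetherian, upgrading the surjection to an isomorphism; and third, $A$ is a domain at every stage.

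Condition (ii) is the tool that keeps the domain property alive throughout. When adjoining a new element one must avoid the finitely many associated primes of $T$ that might introduce zero divisors into $A$. Because $M \notin \Ass T$, standard prime avoidance yields elements of $M$ lying outside the union of these associated primes, providing a ready supply of non-zero-divisors inside the maximal ideal from which to draw at each step. Without (ii), one might be forced to leave $M$ altogether in order to find a non-zero-divisor, destroying locality.

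The chief obstacle is juggling all three requirements simultaneously through the transfinite induction. Forcing $\widehat{A} = T$ (not merely a proper surjection) demands enough adjunctions to ``detect'' every Cauchy sequence in $T$; Noetherianness requires fine control over the ideal structure of each $R_\lambda$, typically enforced by adjoining generators in a prescribed way so that contracted ideals stay finitely generated; and preserving the domain property excludes many otherwise natural adjunctions. Orchestrating the induction so that all three conditions persist at every ordinal stage, including at limit steps, is the technical heart of the argument, and is where I would expect the real work to lie.
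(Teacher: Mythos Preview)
The paper does not give its own proof of this statement; it is quoted verbatim as \cite[Theorem 1]{lech1986method} and used as a black box throughout, so there is nothing in the paper to compare your argument against.

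On its own merits: your necessity argument is correct and standard. Faithful flatness of $A \hookrightarrow T$ does carry non-zero-divisors of $A$ to non-zero-divisors of $T$, which handles (i), and since $M \cap A$ is the (nonzero) maximal ideal of $A$ whenever $M \neq (0)$, any nonzero element of it gives a regular element in $M$, ruling out $M \in \Ass T$.

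Your sufficiency direction, however, is a proof \emph{plan}, not a proof. The outline is faithful to Lech's actual strategy---one does start from the prime subring and ascend through a chain of intermediate subrings, and condition (ii) is exactly what supplies regular elements in $M$ via prime avoidance---but you have not carried out any of the steps. In particular, the three bullet points you list (surjectivity onto $T/M^2$, Noetherianness, and the domain property) each require a concrete mechanism, and these mechanisms interact: the cardinality of the intermediate rings must be controlled so that prime-avoidance arguments remain available at every stage, one must arrange that $IT \cap A = I$ for ideals $I$ of $A$ to get Noetherianness, and the transfinite bookkeeping at limit ordinals is delicate. Your final paragraph candidly flags all of this as ``the technical heart of the argument,'' which is accurate, but it also means what you have written is a description of the shape of Lech's proof rather than an independent proof.
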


Our construction in the proof of Theorem \ref{iff} uses results from \cite{charters2004semilocal}. The following lemma, adapted from Lemma 2.8 in \cite{charters2004semilocal}, will be useful for pointing out additional interesting properties of the rings we construct.

\begin{lemma}\textnormal{(\hspace{1sp}\cite[Lemma 2.8]{charters2004semilocal})} \label{pippalemma}
Let $(T,M)$ be a complete local ring of dimension at least one, and let $G$ be a set of nonmaximal prime ideals of $T$ where $G$ contains the associated prime ideals of $T$ and such that the set of maximal elements of $G$ is finite. Moreover, suppose that if $Q \in \Spec T$ with $Q \subseteq P$ for some $P \in G$ then $Q \in G$. Also suppose that, for each prime ideal $P \in G$, $P$ contains no nonzero integers of $T$. Then there exists a local domain $A$ such that the following conditions hold:
\begin{enumerate}
\item[(i)] $\widehat A \cong T$,
\item[(ii)] If $P$ is a nonzero prime ideal of $A$, then $T \otimes_A k(P) \cong k(P)$, where $k(P) = A_P/PA_P$,
\item[(iii)] $\{P \in \Spec T ~|~ P \cap A = (0)\}=G$, and
\item[(iv)] If $I$ is a nonzero ideal of $A$, then $A/I$ is complete.
\end{enumerate}
\end{lemma}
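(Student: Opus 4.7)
My plan is to build $A$ as a transfinite union of countable quasi-local subrings $R_\alpha \subseteq T$, following the template of Charters and Loepp. Begin with $R_0$ a localization of the prime subring of $T$, arranged so that $R_0 \cap P = (0)$ for every $P \in G$; the hypothesis that no $P \in G$ contains a nonzero integer of $T$ is precisely what makes this possible. At each successor stage I adjoin finitely many new elements to handle one of two kinds of tasks, interleaved along a well-ordering.

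The first kind are \emph{density tasks}: for each pair $(u,n) \in T \times \nn$, adjoin an element of the coset $u + M^n$ to $R_\alpha$. The direct limit $A$ then surjects onto $T/M^n$ for every $n$, so the induced map $\widehat{A} \to T$ is surjective, and a Cohen structure argument shows it is also injective, giving property (i). The second kind are \emph{quotient tasks}: for each $a \in R_\alpha$ and each $c \in T$ with $c \in aT$, adjoin a witness $b \in T$ with $c = ab$. In the limit this forces $aT \cap A = aA$ for every nonzero $a \in A$, which yields property (ii) by a standard localization argument, and makes $A/aA$ embed as a subring of the complete ring $T/aT$ over which it is finite as a module, so $A/aA$ is complete; passing to $I \supseteq aA$ then gives (iv).

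The crux of each successor step is a prime avoidance argument: given a countable subring $R$ of $T$, a coset $u + I$ with $I$ not contained in the union of the maximal elements of $G$, and this finite set of maximal elements, there exists a representative of $u + I$ lying outside every maximal element of $G$. This is where the assumption $\dim T \geq 1$ enters, since each $P \in G$ is nonmaximal in $T$ and hence $T/P$ is uncountable; the hypothesis that $G$ is closed under taking smaller primes then ensures that avoiding the maximal elements of $G$ suffices to avoid all of $G$, so we maintain $R_\alpha \cap P = (0)$ for every $P \in G$ throughout the construction, yielding (iii) in the limit.

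The main obstacle will be orchestration: each quotient task threatens to introduce an element lying in some maximal element of $G$, so the avoidance lemma must be iterated to choose every witness carefully, and the density and quotient tasks must be interleaved so that newly adjoined quotient witnesses do not undo previously established density. This bookkeeping is exactly the content of the Charters--Loepp proof of their Lemma 2.8, and since our hypotheses on $G$ match theirs, their argument adapts with only cosmetic changes to deliver all four conclusions simultaneously.
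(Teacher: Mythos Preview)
The paper does not prove this lemma at all; it is simply quoted verbatim from \cite[Lemma~2.8]{charters2004semilocal} and used as a black box, so there is no ``paper's own proof'' to compare against. Your sketch is a faithful outline of the Charters--Loepp construction that the citation points to, so in that sense you are reproducing the intended argument rather than offering an alternative.

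Two small points are worth flagging. First, your description of the quotient tasks (``for each $a \in R_\alpha$ and each $c \in T$ with $c \in aT$, adjoin a witness $b$'') would, as written, adjoin all of $T$; the closing-up step should range over $c \in R_\alpha$ with $c \in aT$, so that one obtains $aT \cap A = aA$ in the limit. Second, and more substantively, your argument for (iii) only establishes one containment: you maintain $R_\alpha \cap P = (0)$ for $P \in G$, hence $G \subseteq \{P : P \cap A = (0)\}$. The reverse containment---that every prime $Q \notin G$ meets $A$ nontrivially---does not fall out of density and closing-up alone, and in the Charters--Loepp proof it requires an additional family of tasks (for each $Q$ minimal in $\Spec T \setminus G$, adjoin a carefully chosen nonzerodivisor of $Q$ avoiding the maximal elements of $G$). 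Your final sentence defers to their argument for the bookkeeping, which is fine, but you should be aware that this direction of (iii) is a genuine extra step, not a consequence of the machinery you have already described.
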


\begin{remark} \label{1to1}
A particularly useful consequence of Lemma \ref{pippalemma} is that there is a one-to-one correspondence between the nonzero prime ideals of the ring $A$ and the prime ideals of $T$ that are not in $G$.  Note that the map from $\Spec T \setminus G$ to $\Spec A \setminus (0)$ is surjective since $\widehat{A}$ is a faithfully flat extension of $A$. To see that the map is injective, let $Q \in \Spec T \setminus G$ and let $P = Q \cap A$. We show that $Q = PT$. It suffices to prove that $Q/PT = PT/PT$. By (iv), $A/P$ is complete and therefore $A/P \cong \widehat{A/P} \cong T/PT$. Now observe that (letting $A/P$ denote its image in $T/PT$), we have $(Q/PT) \cap (A/P) = (Q \cap A)/P = P/P = (0)$. But since $A/P \cong T/PT$, there can only be one ideal $I$ of $T/PT$ such that $I \cap (A/P) = (0)$. Thus $Q/PT = PT/PT = (0)$ as desired. It follows that the map from $\Spec T \setminus G$ to $\Spec A \setminus (0)$ given by $Q \mapsto Q\cap A$ is bijective, with the inverse mapping given by $P \mapsto PT$. It is clear that this map is also inclusion-preserving. This result will be used heavily in the proof of Theorem \ref{iff}. $\clubsuit$
\end{remark}

The next theorem is explicitly used in our construction.

\begin{theorem}\textnormal{(\hspace{1sp}\cite[Theorem 3.1]{charters2004semilocal})} \label{pippa}
Let $(T,M)$ be a complete local ring, and $G\subseteq \Spec T$ such that $G$ is nonempty and the number of maximal elements of $G$ is finite. Then there exists a local domain $A$ such that $\widehat{A}\cong T$ and the set $\{P \in \Spec T ~|~ P \cap A = (0)\}$ is exactly the elements of $G$ if and only if $T$ is a field and $G=\{(0)\}$ or the following conditions hold:
\begin{enumerate}
\item[(i)] $M\notin G$, and $G$ contains all the associated prime ideals of $T$,
\item[(ii)] If $Q\in G$ and $P\in \Spec T$ with $P\subseteq Q$, then $P\in G$, and
\item[(iii)] If $Q\in G$, then $Q~\cap$ prime subring of $T=(0)$.
\end{enumerate}
\end{theorem}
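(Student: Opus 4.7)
The plan is to split the if-and-only-if into its two directions, with the trivial field case ($T$ a field, $G = \{(0)\}$) handled separately. The forward direction should follow from routine faithful-flatness arguments, while the backward direction will reduce to invoking Lemma \ref{pippalemma}.

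For the forward direction, suppose a local domain $A$ exists with $\widehat A \cong T$ and $G = \{P \in \Spec T : P \cap A = (0)\}$. If $A$ is a field then $T = \widehat A = A$ and $\Spec T = \{(0)\}$, forcing $G = \{(0)\}$. Otherwise the maximal ideal of $A$ is nonzero, so $M \cap A \neq (0)$ and $M \notin G$. Since $A \hookrightarrow T$ is faithfully flat, every nonzero element of the domain $A$ is a nonzerodivisor on $T$, so no $P \in \Ass T$ meets $A$ nontrivially, giving $\Ass T \subseteq G$. Condition (ii) follows from $P \subseteq Q \implies P \cap A \subseteq Q \cap A$, and (iii) follows because the prime subring of $T$ is contained in $A$, so $Q \cap A = (0)$ forces $Q$ to meet that subring in $(0)$ as well.

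For the backward direction the trivial case is met by $A = T$. Otherwise I will apply Lemma \ref{pippalemma}. Conditions (i) and (ii) together with the finiteness of the maximal elements of $G$ transfer directly; the only nonobvious hypothesis to verify is that each $P \in G$ contains no nonzero integers of $T$. If $\operatorname{char} T = 0$, the prime subring of $T$ is $\zz$, so this is exactly condition (iii); if $\operatorname{char} T = p > 0$, the nonzero integers of $T$ lie in $\mathbb{F}_p^{\times}$ and are units, so they cannot sit inside any prime. One also needs $\dim T \geq 1$: if $T$ were zero-dimensional but not a field, then $\Spec T = \{M\}$ with $M \in \Ass T$, and (i) would force both $M \in G$ and $M \notin G$, a contradiction. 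Once the hypotheses are checked, Lemma \ref{pippalemma} supplies a local domain $A$ with $\widehat A \cong T$, and its conclusion (iii) yields exactly $\{P \in \Spec T : P \cap A = (0)\} = G$.

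The main obstacle is entirely packaged inside Lemma \ref{pippalemma}: constructing $A$ inside $T$ as a direct limit of carefully chosen subrings so that completion returns $T$ and $G$ is precisely the locus of primes contracting to $(0)$. Modulo that lemma, the substantive work is the characteristic-dependent translation between condition (iii) and the ``no nonzero integers'' hypothesis, together with the edge-case argument ruling out $\dim T = 0$ in the non-field case.
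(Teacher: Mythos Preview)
Your argument is correct. Note, however, that the paper does not actually supply a proof of this theorem: it is quoted verbatim as \cite[Theorem 3.1]{charters2004semilocal} and used as a black box. Since Lemma~\ref{pippalemma} is \cite[Lemma 2.8]{charters2004semilocal} from the same source, your reduction of the backward direction to that lemma is exactly the intended one, and the forward direction via faithful flatness is standard. Your handling of the two edge cases (the characteristic-dependent reading of ``no nonzero integers'' versus ``trivial intersection with the prime subring,'' and the exclusion of $\dim T = 0$ when $T$ is not a field) is accurate and worth recording, as these points are easy to overlook.
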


\subsection{The Characterization}

In this section, we characterize the completions of noncatenary local domains. To do this, we start with a complete local ring $T$ and use Theorem \ref{pippa} with $G=\{P\in\Spec T ~|~ P\subseteq Q \text{ for some } Q\in\Ass T\}$ to construct a local domain $A$ such that $A$ satisfies the conditions described in Theorem \ref{pippa}.  We then use the one-to-one inclusion-preserving correspondence described in Remark \ref{1to1} to show that $A$ is noncatenary. For the reverse direction, we first need a few lemmas which describe the relationship between chains of prime ideals in a local domain and chains of prime ideals in its completion.

\begin{lemma}\label{hardinduction}
Let $A$ be a local domain such that $\widehat{A}\cong T$.  Let $M$ denote the maximal ideal of $T$, and let $\mathcal{C}_T$ be a chain of prime ideals in $T$ of the form $P_0 \subsetneq \dots \subsetneq P_{n-1} \subsetneq M$ with length $n\geq 2$ and $P_0\cap A=(0)$. If $\mathcal{C}_A$, the chain obtained by intersecting the prime ideals of $\mathcal{C}_T$ with $A$, is such that $(0)=P_0 \cap A = P_1 \cap A\subsetneq P_2\cap A\subsetneq \dots \subsetneq P_{n-1} \cap A \subsetneq M \cap A$, then $\mathcal{C}_A$ is not saturated.
\end{lemma}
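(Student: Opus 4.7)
The plan is to proceed by induction on the chain length $n \geq 2$, using the faithful flatness of the completion map $A \hookrightarrow T$. The key tools are the dimension formula $\height(P) = \height(P \cap A) + \height(P/(P \cap A)T)$ for $P \in \Spec T$, and the identity $IT \cap A = I$ for ideals $I \subseteq A$, both consequences of faithful flatness.

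For the base case $n = 2$, the chain $\mathcal{C}_A$ is $(0) \subsetneq M \cap A$. Since $(M \cap A)T = M$, the dimension formula gives $\height(M \cap A) = \height(M) \geq 2$, the last bound witnessed by $P_0 \subsetneq P_1 \subsetneq M$. Krull's principal ideal theorem applied to any nonzero non-unit of $A$ then produces a height-one prime strictly between $(0)$ and $M \cap A$, showing $\mathcal{C}_A$ is not saturated.

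For the inductive step with $n \geq 3$, set $Q_i = P_i \cap A$ and split on whether $P_2$ is minimal over $Q_2 T$. If it is, the dimension formula gives $\height(Q_2) = \height(P_2) \geq 2$ (using $P_0 \subsetneq P_1 \subsetneq P_2$), so Krull's principal ideal theorem yields a prime strictly between $(0)$ and $Q_2$ in $A$. Otherwise, pick a minimal prime $P^*$ of $Q_2 T$ with $P^* \subsetneq P_2$; since $Q_2 T \cap A = Q_2$ and $P^* \subseteq P_2$, one has $P^* \cap A = Q_2$. Pass to the quotient $A' = A/Q_2$, whose completion is $T' = T/Q_2 T$. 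The chain
\[
P^*/Q_2 T \subsetneq P_2/Q_2 T \subsetneq P_3/Q_2 T \subsetneq \cdots \subsetneq P_{n-1}/Q_2 T \subsetneq M/Q_2 T
\]
in $T'$ has length $n - 1 \geq 2$; its first two primes both contract to $(0)$ in $A'$ while the remaining contractions form the strict chain $Q_3/Q_2 \subsetneq \cdots \subsetneq (M \cap A)/Q_2$, so the hypotheses of the lemma are reproduced for $(A', T')$ with chain length $n - 1$. By the inductive hypothesis, the contracted chain in $A'$ is not saturated, and lifting a witnessing intermediate prime through the quotient map $A \twoheadrightarrow A/Q_2$ produces a prime in $A$ strictly between two consecutive $Q_i$'s.

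The main obstacle is verifying that the second case genuinely reproduces the lemma's hypotheses on a shorter chain. The key insight is that when $P_2$ fails to be minimal over $Q_2 T$, the pair $(P^*, P_2)$ plays the role originally played by $(P_0, P_1)$: both contract to the zero ideal in $A'$, so the bottom-collapse pattern is recreated one step up the chain with length reduced by exactly one, letting induction close the argument.
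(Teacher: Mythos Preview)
Your proof is correct and follows the same inductive skeleton as the paper: the base case uses $\height(M\cap A)=\height M\ge 2$, and the inductive step passes to the quotient $A'=A/(P_2\cap A)$ with completion $T'=T/(P_2\cap A)T$ to shorten the chain by one. The only difference is in how the case split for the inductive step is implemented. The paper picks a nonzero $a\in P_2\cap A$, uses Krull's Principal Ideal Theorem inside $T_{P_2}$ to produce a height-one prime $Q\subsetneq P_2$ containing $a$, and then cases on whether $Q\cap A\subsetneq P_2\cap A$ or $Q\cap A=P_2\cap A$; in the latter case $Q$ serves as the new bottom prime for the quotient chain. You instead case on whether $P_2$ is minimal over $(P_2\cap A)T$ and, when it is not, take any minimal prime $P^*$ of $(P_2\cap A)T$ inside $P_2$ for the same role. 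Your use of the flat dimension formula $\height P=\height(P\cap A)+\height(P/(P\cap A)T)$ makes the first case slightly cleaner (it avoids the localization at $P_2$), while the paper's argument is more explicit about where the intermediate prime in $A$ comes from. The two arguments are interchangeable and yield the same quotient setup for the induction.
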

\begin{proof}
We prove this by induction on $n$, the length of $\mathcal{C}_T$. If $n=2$, then $\mathcal{C}_A$ is $(0)=P_0 \cap A = P_1 \cap A \subsetneq M \cap A$. Since $\height(M \cap A)=\height M\geq 2$, there must exist a prime ideal strictly between $(0)$ and $M \cap A$. Thus $\mathcal{C}_A$ is not saturated, so the base case $n=2$ holds. Now assume that the lemma holds whenever $\mathcal{C}_T$ has length $i$ such that $2 \leq i\leq n-1$. 

We show that the lemma holds for chains of length $n$. Suppose $n\geq 3$. Then $\mathcal{C}_A$ is $(0)=P_0 \cap A =P_1\cap A \subsetneq P_2 \cap A \subsetneq \dots \subsetneq P_{n-1} \cap A \subsetneq M \cap A$. Since $P_2 \cap A \neq (0)$, we can choose a nonzero element $a \in P_2 \cap A$. Note that $a$ cannot be a zero divisor, as it is contained in the domain $A$, and it follows that $\height aT>0$. Then $\height(aT_{P_2}) > 0$ as well, so Krull's Principal Ideal Theorem gives that $\height(aT_{P_2}) = 1$. Thus $aT_{P_2}$ is contained in a height-1 prime ideal $Q' \in \Spec T_{P_2}$. Let $Q$ be the preimage of $Q'$ under the natural surjection $\Spec T \to \Spec T_{P_2}$. Then $aT \subseteq Q \subsetneq P_2$ since $\dim(T_{P_2})\geq 2$. But clearly $Q \cap A \neq (0)$, so we have that $(0) \subsetneq Q \cap A \subseteq P_2 \cap A$. 

There are two possible cases, either $Q \cap A \subsetneq P_2 \cap A$ or $Q\cap A = P_2 \cap A$ (see Figure 1). In the first case, $\mathcal{C}_A$ is not saturated. Otherwise, we consider $A' = \dfrac{A}{(Q \cap A)}$, which is also a local domain and whose completion is $T' = \dfrac{T}{(Q\cap A)T}$.

Then let $\mathcal{C}_{T'}$ be
\[\dfrac{Q}{(Q\cap A)T}\subsetneq \dfrac{P_2}{(Q\cap A)T}  \subsetneq \dots \subsetneq \dfrac{P_{n-1}}{(Q\cap A)T} \subsetneq \dfrac{M}{(Q\cap A)T}
\] 
which is a chain of prime ideals such that $\dfrac{Q}{(Q\cap A)T}\cap A'= (0)$.

Let $\mathcal{C}_{A'}$ be 
\[(0)=\dfrac{Q \cap A}{(Q\cap A)} = \dfrac{P_2 \cap A}{(Q\cap A)} \subsetneq \dots \subsetneq \dfrac{P_{n-1} \cap A}{(Q\cap A)} \subsetneq \dfrac{M \cap A}{(Q\cap A)}.
\]
Note that $\mathcal{C}_{T'}$ has length $n-1\geq 2$ because $P_2 \subsetneq M$. Then, since $\mathcal{C}_{A'}$ is of the necessary form,  our inductive hypothesis applies, so $\mathcal{C}_{A'}$ is not saturated. Therefore, $\mathcal{C}_A$ cannot be saturated. Hence, the lemma holds for chains of length $n$ in $T$, completing our inductive step and the proof.
\begin{figure}[H] 
\begin{multicols}{2}
\centering
Case 1: $\qquad$ \\
\vspace{0.15in}
\begin{tikzpicture}
\node (zero) at (0,0) {$(0)$};
\node (qa) at (-0.7,1.25) {$Q \cap A$};
\node (p2a) at (0,2.5) {$P_2 \cap A$};
	\node at (-0.9,2.5) {$a \in$};
\node at (0,3.23) {\vdots};
\node (pn-1a) at (0,3.75) {$P_{n-1} \cap A$};
\node (ma) at (0,5) {$M \cap A$};
\node at (0, 5.7) {$\underline{\Spec A}$};

\node (p0) at (3,0) {$P_0$};
\node (p1) at (3,1.25) {$P_1$};
\node (p2) at (3,2.5) {$P_2$};
\node (q) at (4,1.25) {$Q$};
	\node at (4.7, 1.25) {$\supseteq aT$};
\node at (3,3.23) {\vdots};
\node (pn-1) at (3,3.75) {$P_{n-1}$};
\node (m) at (3,5) {$M$};
\node at (3, 5.7) {$\underline{\Spec T}$};

\draw (zero) -- (qa) -- (p2a) -- (0,2.9);
\draw (0,3.37) -- (pn-1a) -- (ma);

\draw (p0) -- (p1);
\draw (p1) -- (p2) -- (3,2.9);
	\draw (q) -- (p2);
\draw (3,3.37) -- (pn-1) -- (m);

\draw [->, dashed, thick] (p1) to [out=180, in=40] (zero);
\draw [->, dashed, thick] (p0) to [out=200, in=330] (zero);
\end{tikzpicture}
\columnbreak

\centering
Case 2: \\
\vspace{0.15in}
\begin{tikzpicture}
\node (zero) at (0,0) {$(0)$};
\node (p3a) at (0,1.5) {$\dfrac{P_3 \cap A}{Q \cap A}$};
\node at (0,2.6) {\vdots};
\node (pn-1a) at (0,3.5) {$\dfrac{P_{n-1} \cap A}{Q \cap A}$};
\node (ma) at (0,5) {$\dfrac{M \cap A}{Q \cap A}$};
\node at (0, 6) {$\underline{\Spec A'}$};

\node (q) at (3,0) {$\dfrac{Q}{(Q \cap A)T}$};
\node (p2) at (3,1.5) {$\dfrac{P_2}{(Q \cap A)T}$};
\node at (3,2.6) {\vdots};
\node (pn-1) at (3,3.5) {$\dfrac{P_{n-1}}{(Q \cap A)T}$};
\node (m) at (3,5) {$\dfrac{M}{(Q \cap A)T}$};
\node at (3, 6) {$\underline{\Spec T'}$};

\draw (zero) -- (p3a) -- (0,2.2);
\draw (0,2.8) -- (pn-1a) -- (ma);

\draw (q) -- (p2) -- (3,2.2);
\draw (3,2.8) -- (pn-1) -- (m);

\draw [->, dashed, thick] (p2) to [out=190, in=50] (zero);
\draw [->, dashed, thick] (q) to [out=200, in=330] (zero);
\end{tikzpicture}
\end{multicols}
\caption{}
\end{figure}
\end{proof}

Note that the argument in the proof of Lemma \ref{hardinduction} can be generalized to the case where $(0)=P_0\cap A=P_1\cap A=\dots =P_j\cap A$ for any integer $j$ where $2\leq j\leq n-1$. Now, we will use Lemma \ref{hardinduction} to show that, in general, if $\mathcal{C}_A$ has length less than that of $\mathcal{C}_T$, then $\mathcal{C}_A$ is not saturated. 

\begin{lemma}\label{killinglongchains}
Let $A$ be a local domain such that $\widehat{A}\cong T$.  Let $M$ denote the maximal ideal of $T$, and let $\mathcal{C}_T$ be a chain of prime ideals of $T$ of the form $P_0 \subsetneq \dots \subsetneq P_{n-1} \subsetneq M$ of length  $n\geq 2$ with $P_0\cap A=(0)$. If the chain $\mathcal{C}_A$ given by $(0)=P_0 \cap A \subseteq \dots \subseteq P_{n-1} \cap A \subseteq M \cap A$ has length less than $n$, then $\mathcal{C}_A$ is not saturated.
\end{lemma}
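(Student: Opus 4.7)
The plan is to induct on $n \geq 2$, using Lemma \ref{hardinduction} and the generalization noted in the remark as the engine. The base case $n = 2$ is immediate: since $\mathcal{C}_A$ has length less than $2$ and $M \cap A \neq (0)$, the chain is $(0) \subsetneq M \cap A$ of length $1$, and $\dim A = \dim T \geq 2$ guarantees a prime of $A$ strictly between $(0)$ and $M \cap A$. Write $Q_i := P_i \cap A$ for $i \leq n-1$ and $Q_n := M \cap A$. For the inductive step with $n \geq 3$, let $k$ be the smallest index in $\{0, 1, \ldots, n-1\}$ with $Q_k = Q_{k+1}$; by minimality $Q_0 \subsetneq Q_1 \subsetneq \dots \subsetneq Q_k$.

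If $k \geq 2$, I would quotient by $Q_{k-1}$, which is a proper non-maximal prime since $Q_{k-1} \subsetneq Q_k \subseteq M \cap A$. The local domain $A' := A/Q_{k-1}$ has completion $T' := T/Q_{k-1}T$, and the chain $P_{k-1}/Q_{k-1}T \subsetneq P_k/Q_{k-1}T \subsetneq \dots \subsetneq M/Q_{k-1}T$ in $T'$ has length $n - k + 1 \in [2, n-1]$ with its first prime intersecting $A'$ in $(0)$. Its intersection chain in $A'$ loses at least one inclusion because $Q_k/Q_{k-1} = Q_{k+1}/Q_{k-1}$, hence has length at most $n - k < n - k + 1$. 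The inductive hypothesis then supplies a prime of $A'$ strictly between two consecutive members of that chain, which lifts to a prime of $A$ witnessing the non-saturation of $\mathcal{C}_A$.

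If $k = 0$, let $j$ be the largest index with $Q_j = (0)$, so $1 \leq j \leq n-1$. When $j = n-1$, the chain $\mathcal{C}_A$ reduces to $(0) \subsetneq M \cap A$ and is not saturated by the dimension argument. Otherwise, consider the truncated chain $P_j \subsetneq P_{j+1} \subsetneq \dots \subsetneq M$ in $T$, of length $n - j < n$, whose first prime still intersects $A$ in $(0)$. If its intersection chain in $A$ has length strictly less than $n - j$, the inductive hypothesis applies. If not, that chain must be $(0) \subsetneq Q_{j+1} \subsetneq \dots \subsetneq Q_n$ with all strict inclusions, and together with $(0) = Q_0 = \dots = Q_j$ this is precisely the hypothesis of Lemma \ref{hardinduction} (when $j = 1$) or of its generalization (when $j \geq 2$), so $\mathcal{C}_A$ is not saturated.

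The delicate case is $k = 1$: then $(0) = Q_0 \subsetneq Q_1 = Q_2$, so quotienting by $Q_0 = (0)$ reduces nothing and Lemma \ref{hardinduction}'s hypothesis fails. Here I would borrow the construction from the proof of Lemma \ref{hardinduction}: pick a nonzero $a \in Q_1 \subseteq P_2 \cap A$ and use Krull's principal ideal theorem in $T_{P_2}$ (whose dimension is at least $2$) to produce a prime $Q$ of $T$ with $aT \subseteq Q \subsetneq P_2$, whence $(0) \subsetneq Q \cap A \subseteq Q_1$. If $Q \cap A \subsetneq Q_1$, then $Q \cap A$ is a prime of $A$ strictly between $Q_0$ and $Q_1$, so $\mathcal{C}_A$ is not saturated outright. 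Otherwise $Q \cap A = Q_1$, and I pass to $A/Q_1$ and $T/Q_1 T$: the chain $Q/Q_1T \subsetneq P_2/Q_1T \subsetneq \dots \subsetneq M/Q_1T$ in $T/Q_1T$ has length $n-1 \geq 2$ with its first prime intersecting $A/Q_1$ in $(0)$, and its intersection chain in $A/Q_1$ loses an inclusion at the start because $Q_2/Q_1 = (0)$, hence has length at most $n-2 < n-1$. The inductive hypothesis then produces the required intermediate prime, which lifts to $A$. This $k = 1$ case is the main obstacle, since neither a direct application of Lemma \ref{hardinduction} nor the quotient trick alone suffices, forcing us to import the inner argument of Lemma \ref{hardinduction} itself.
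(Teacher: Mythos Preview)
Your argument is correct, but it is considerably more involved than the paper's. The paper does not induct on $n$ at all: instead of taking the \emph{smallest} index where a collapse occurs, it takes the \emph{largest} index $m$ with $P_{m-1}\cap A = P_m\cap A$. This single choice forces $P_m\cap A \subsetneq P_{m+1}\cap A \subsetneq \dots \subsetneq M\cap A$ to be strict throughout, so after passing to $A' = A/(P_m\cap A)$ and $T' = T/(P_m\cap A)T$, the induced chain in $A'$ has exactly the shape $(0)=\overline{P_{m-1}\cap A}=\overline{P_m\cap A}\subsetneq \overline{P_{m+1}\cap A}\subsetneq\dots$, which is precisely the hypothesis of Lemma~\ref{hardinduction}. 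One application of that lemma finishes the proof.

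By contrast, your choice of the smallest index $k$ forces a three-way case split. The cases $k\ge 2$ and $k=0$ are handled by induction (plus, in the $k=0$ branch, a direct appeal to Lemma~\ref{hardinduction} or the generalized remark), while $k=1$ requires you to re-run the principal ideal argument from inside the proof of Lemma~\ref{hardinduction} before you can invoke the inductive hypothesis. All of this is valid, but it duplicates work: the $k=1$ case essentially reproves a fragment of Lemma~\ref{hardinduction}, and the $k=0$ case leans on the unproved remark about $j\ge 2$. The paper's approach avoids both the induction and the generalized remark by making the right choice of index up front; the price you pay for taking the smallest index is precisely the extra machinery needed to push the collapse to the bottom of the chain.
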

\begin{proof}
First, note that $P_{n-1}\cap A \subsetneq M\cap A$ because $\height(P_{n-1} \cap A) \leq \height  P_{n-1} < \height M=\height(M \cap A)$. Now suppose $\mathcal{C}_A$ has length strictly less than $n$. Then there must be equality at some point in the chain, so let $m$ denote the largest integer such that $P_{m-1} \cap A = P_m \cap A$. Note that this choice of $m$ ensures that $P_m \cap A \subsetneq P_{m+1} \cap A \subsetneq \dots \subsetneq M \cap A$, and since $P_{n-1} \cap A \neq M \cap A$, $m\leq n-1$.  Let $A' = \dfrac{A}{(P_m \cap A)}$, which is also a local domain, and consider its completion $T' = \dfrac{T}{(P_m\cap A)T}$. Then let $\mathcal{C}_{T'}$ be the following chain of prime ideals  of $T'$:
\[\dfrac{P_{m-1}}{(P_m\cap A)T} \subsetneq \dfrac{P_m}{(P_m\cap A)T} \subsetneq \dfrac{P_{m+1}}{(P_m\cap A)T} \subsetneq \dots \subsetneq \dfrac{P_{n-1}}{(P_m\cap A)T} \subsetneq \dfrac{M}{(P_m\cap A)T}.
\]
Since $m \leq n-1$, this is a chain of length at least 2. Let $\mathcal{C}_{A'}$ be the corresponding chain of prime ideals of $A'$ given as follows:
\[(0)=\dfrac{P_{m-1} \cap A}{(P_m\cap A)} = \dfrac{P_m \cap A}{(P_m\cap A)} \subsetneq \dfrac{P_{m+1}\cap A}{(P_m\cap A)} \subsetneq \dots \subsetneq \dfrac{P_{n-1} \cap A}{(P_m\cap A)} \subsetneq \dfrac{M \cap A}{(P_m\cap A)}.
\]
Observe that $\mathcal{C}_{T'}$ and $\mathcal{C}_{A'}$ satisfy the conditions of Lemma \ref{hardinduction}, therefore, $\mathcal{C}_{A'}$ is not saturated. This gives us that $\mathcal{C}_A$ is not saturated.  
\end{proof}

We note that Lemma \ref{killinglongchains} is particularly useful when $\mathcal{C}_T$ is saturated. In the next lemma, we show that, in a local ring, it is possible to find saturated chains of prime ideals that satisfy nice properties.  This result will be used to prove our main theorems.

\begin{lemma} \label{chainconstruction}
Let $(T,M)$ be a local ring with $M\notin\Ass T$ and let $P\in \Min T$ with $\dim(T/P)=n$. Then there exists a saturated chain of prime ideals of $T$, $P\subsetneq Q_1\subsetneq \dots \subsetneq Q_{n-1}\subsetneq M$, such that, for each $i=1,\dots,n-1$, $Q_i \notin \Ass T$ and $P$ is the only minimal prime ideal contained in $Q_i$.
\end{lemma}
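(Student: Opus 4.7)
The plan is to start with an arbitrary saturated chain of length $n$ in $T$ from $P$ to $M$ and then modify its intermediate terms one at a time so that each $Q_i$ becomes \emph{good}, meaning $Q_i\notin\Ass T$ and $P$ is the only minimal prime of $T$ contained in $Q_i$. Such an initial chain exists because $\dim(T/P)=n$ guarantees a chain of length $n$ from $P$ to $M$ in $T$, and any such chain must already be saturated since any refinement has length at least $n$ while the maximum possible length is $n$.

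The inductive modification proceeds as follows. Suppose $Q_0=P,Q_1,\dots,Q_k$ have been arranged so that $Q_1,\dots,Q_k$ are good, and consider how to adjust $Q_{k+1}$. Form the local ring $S:=(T/Q_k)_{Q_{k+2}/Q_k}$. A dimension count shows $\dim S = 2$: the chain $Q_k\subsetneq Q_{k+1}\subsetneq Q_{k+2}$ provides a lower bound, while a chain of length $d$ in $S$ would splice with the saturated segments $P\subsetneq\cdots\subsetneq Q_k$ and $Q_{k+2}\subsetneq\cdots\subsetneq M$ to give a chain in $T$ from $P$ to $M$ of length $n+d-2$, forcing $d\leq 2$. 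A standard prime-avoidance argument (choose an element of the maximal ideal of $S$ outside any finite list of height-$1$ primes and apply Krull's Principal Ideal Theorem to its principal ideal) shows that $S$ has infinitely many height-$1$ primes, and each of these lifts to a prime $Q_{k+1}'$ of $T$ with $Q_k\subsetneq Q_{k+1}'\subsetneq Q_{k+2}$ and both inclusions saturated.

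The remaining step is to verify that only finitely many candidates $Q_{k+1}'$ are ``bad.'' Avoiding the finite set $\Ass T$ rules out only finitely many candidates. For each minimal prime $P'\in\Min T\setminus\{P\}$, the candidates containing $P'$ correspond to height-$1$ primes of $S$ lying in $V_S(J)$, where $J$ is the image in $S$ of $(P'+Q_k)/Q_k$. Because $Q_k$ is good, $P'\not\subseteq Q_k$, so $J$ is a nonzero ideal of the domain $S$; hence $V_S(J)$ is a proper closed subset, and its height-$1$ points are precisely the height-$1$ minimal primes of $J$, which form a finite set. Since $\Min T$ is finite, finitely many candidates are excluded overall, leaving infinitely many good choices for $Q_{k+1}'$. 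Iterating for $k=0,1,\dots,n-2$ produces the desired chain.

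The main obstacle I anticipate is carrying the ``$P$ is the only minimal prime in $Q_i$'' condition through the induction. The observation that makes this work is that the inductive goodness of $Q_k$ forces $P'\not\subseteq Q_k$ for each $P'\in\Min T\setminus\{P\}$, which is exactly what ensures the exclusion locus $V_S(J)$ in $S$ is a proper closed subset and hence contains only finitely many height-$1$ primes; without this, the bad primes could potentially be too dense to avoid. A secondary technical point is verifying $\dim S=2$ exactly, which uses the maximality built into $\dim(T/P)=n$ rather than any catenary hypothesis on $T$.
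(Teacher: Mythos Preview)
Your proof is correct and takes essentially the same approach as the paper: both start from a saturated length-$n$ chain and replace the intermediate primes one at a time from the bottom up, arguing at each step that there are infinitely many candidate primes strictly between the last good prime and the next-but-one prime of the current chain, while only finitely many of these are bad. The only cosmetic difference is that you pass explicitly to the two-dimensional local domain $S=(T/Q_k)_{Q_{k+2}/Q_k}$ and count height-$1$ primes there, whereas the paper stays in $T$ and shows that a candidate containing some $P'\in\Min T\setminus\{P\}$ must be a minimal prime of $Q_{i-1}+P'$; the inductive use of ``$P'\not\subseteq Q_{i-1}$'' that you single out is exactly what the paper uses (implicitly) to get the strict inclusion needed for that minimality argument.
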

\begin{proof}
Observe that, since $\dim(T/P)=n$, there must exist a saturated chain of prime ideals in $T$ from $P$ to $M$ of length $n$, say $P\subsetneq P_1\subsetneq \dots \subsetneq P_{n-1}\subsetneq M$. We first show that we can choose $Q_1 \in \Spec T$ such that $P\subsetneq Q_1\subsetneq P_2$ is saturated, $P$ is the only minimal prime ideal of $T$ contained in $Q_1$, and $Q_1\notin \Ass T$. To do so, consider the following sets:
\begin{align*}
B &=\{Q\in \Spec T\; |\;P\subsetneq Q\subsetneq P_2 \;\text{is saturated}\}, \\
B_1 &=\{Q\in B\;|\;\exists \,P'\in \Min T\setminus \{P\} \;\mathrm{with}\; P'\subsetneq Q\}, \\
B_2 &=\{Q\in B\;|\; Q\in \Ass T\}.
\end{align*}
Then $B_1, B_2 \subseteq B$ and it suffices to find $Q_1\in B\setminus (B_1 \, \cup \, B_2)$. Note that since $T$ is Noetherian and $P\subsetneq P_1\subsetneq P_2$, we know that $B$ contains infinitely many elements and $B_2$ contains finitely many elements.

Next, we show that $B_1$ contains finitely many elements. Suppose $Q\in B_1$ contains $P'\in \Min T\setminus \{P\}$. Then $P+P'\subseteq Q$. We claim that $Q$ must be a minimal prime ideal of $P+P'$. Suppose instead that it is not. Then there must exist $Q'\in \Spec T$ with $P+P'\subseteq Q' \subsetneq Q$. But then we have $P \subsetneq Q' \subsetneq Q$ (where $P\neq Q'$ because $P' \not\subseteq P$), contradicting the fact that $P \subsetneq Q$ is saturated. Thus, if $Q\in B_1$ contains $P'$, then $Q$ is a minimal prime ideal of $P+P'$. Then, as there are only finitely many minimal prime ideals of $P+P'$ for each of the finitely many $P'\in \Min T\setminus \{P\}$, we have that $B_1$ contains finitely many elements. Therefore, we can choose $Q_1$ to be one of the infinitely many prime ideals in $B\setminus(B_1\, \cup \,B_2)$.

Now, for each $i=2,\dots , n-1$, we sequentially choose $Q_i\in \Spec T$ so that $Q_{i-1}\subsetneq Q_i\subsetneq P_{i+1}$ is saturated, $P$ is the only minimal prime ideal contained in $Q_i$, and $Q_i \notin \Ass T$ using the same argument as above. More specifically, to choose $Q_i$, redefine the set $B$ as $B =\{Q\in \Spec T\; |\;Q_{i-1}\subsetneq Q\subsetneq P_{i+1} \;\text{is saturated}\}$ and define $B_1$ and $B_2$ as before. Then $B$ is an infinite set, $B_2$ is a finite set, and we can show that $B_1$ is a finite set as above by showing that if $Q\in B_1$ contains $P'$, then $Q$ is a minimal prime ideal of $Q_{i-1}+P'$. Hence, $B\setminus(B_1\, \cup \,B_2)$ is infinite and we choose $Q_i$ to be in this set. Then the resulting chain $P\subsetneq Q_1\subsetneq \dots \subsetneq Q_{n-1}\subsetneq M$ will satisfy the desired properties.
\end{proof}

The next lemma will be used to show that the conditions in our main theorems are necessary.

\begin{lemma} \label{catoct} 
Let $(T,M)$ be a complete local ring and let $A$ be a local domain such that $\widehat{A}\cong T$. If $A$ contains a saturated chain of prime ideals from $(0)$ to $M\cap A$ of length $n$, then there exists $P\in \Min T$ such that $\dim(T/P)=n$.
\end{lemma}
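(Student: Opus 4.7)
The plan is to lift the given saturated chain in $A$ to a saturated chain of the same length in $T$ whose bottom is a minimal prime $P$ of $T$, and then to conclude that $\dim(T/P) = n$ by invoking the fact that every complete Noetherian local ring is catenary.

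First, I would lift the saturated chain $(0) = Q_0 \subsetneq Q_1 \subsetneq \cdots \subsetneq Q_n = M \cap A$ up to $T$ via the going-down theorem for the faithfully flat extension $A \hookrightarrow T$: starting from $P_n := M$, inductively choose primes $P_{i-1} \subsetneq P_i$ of $T$ with $P_{i-1} \cap A = Q_{i-1}$, producing a chain $P_0 \subsetneq P_1 \subsetneq \cdots \subsetneq P_n = M$ in $T$ with $P_i \cap A = Q_i$, and in particular $P_0 \cap A = (0)$. Then I would pick a minimal prime $P \in \Min T$ with $P \subseteq P_0$; since $A$ is a domain, $P \cap A \subseteq P_0 \cap A = (0)$ forces $P \cap A = (0)$.

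The key step is to show that $P = P_0$ (so that $P_0 \in \Min T$) and that the chain $P_0 \subsetneq P_1 \subsetneq \cdots \subsetneq P_n = M$ is already saturated in $T$. To that end, refine the chain $P \subseteq P_0 \subsetneq P_1 \subsetneq \cdots \subsetneq P_n = M$ to a saturated chain $P = R_0 \subsetneq R_1 \subsetneq \cdots \subsetneq R_m = M$ in $T$, so $m \geq n$, with $m \geq n+1$ if $P \subsetneq P_0$. Because every $P_i$ occurs among the $R_j$'s, the contracted chain $R_0 \cap A \subseteq \cdots \subseteq R_m \cap A$ contains $Q_0, Q_1, \ldots, Q_n$; any additional prime $R_j \cap A$ lies between two consecutive $Q_i$'s, and the saturation of the original chain in $A$ forces it to equal one of the $Q_i$'s. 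Hence the distinct primes of the contracted chain are exactly $\{Q_0, \ldots, Q_n\}$, giving it length $n$. If $m > n$, then Lemma \ref{killinglongchains} applied to $R_0 \subsetneq \cdots \subsetneq R_m$ (a chain starting at a prime contracting to $(0)$) would force the contracted chain to be non-saturated in $A$; but that chain is the original saturated chain, a contradiction. Thus $m = n$ and $P = P_0 \in \Min T$, and we have a saturated chain of length $n$ from $P_0$ to $M$ in $T$.

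Finally, I would invoke the classical fact that every complete Noetherian local ring is universally catenary, which follows from Cohen's structure theorem since $T$ is a quotient of a regular local formal power series ring, so $T$, and hence $T/P_0$, is catenary. Catenarity implies every saturated chain of prime ideals from $P_0$ to $M$ in $T$ has the same length, which equals $\dim(T/P_0)$. Since we exhibited one such chain of length $n$, we obtain $\dim(T/P_0) = n$. The degenerate cases $n = 0$ (where $A$ is a field, so $T = A$ and $(0) \in \Min T$ has $\dim(T/(0)) = 0$) and $n = 1$ (where $\dim A = \dim T = 1$, so any minimal prime $P$ achieving $\dim T$ works) are immediate. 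The main technical obstacle is the refinement argument that sets up Lemma \ref{killinglongchains}: one must carefully track the distinct primes of the contracted chain to ensure that it really has length $n$ and really is saturated, which is what produces the contradiction ruling out $m > n$.
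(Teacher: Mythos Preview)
Your proposal is correct and follows essentially the same approach as the paper: lift the chain via going-down for the flat extension $A\hookrightarrow T$, then use Lemma~\ref{killinglongchains} to force the lifted chain to be saturated with minimal bottom, and conclude via catenarity of the complete local ring $T$. The only cosmetic difference is that the paper argues minimality of $P_0$ and saturation of the lifted chain in two separate one-step applications of Lemma~\ref{killinglongchains}, whereas you handle both simultaneously by passing to a single saturated refinement; the content is the same.
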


\begin{proof}
Let $\mathcal{C}_A$ be a saturated chain of prime ideals in $A$ from $(0)$ to $M\cap A$ of length $n$. Since $T$ is a flat extension of $A$, we can apply the Going Down Theorem. This implies that there exists a chain of prime ideals in $T$ of length $n$ from some prime ideal $P$ to $M$, which we call $\mathcal{C}_T$, such that the image of $\mathcal{C}_T$ under the intersection map with $A$ is $\mathcal{C}_A$. We show that $P\in \Min T$ and $\mathcal{C}_T$ is saturated. To see this, suppose $P\notin \Min T$. Then there must exist some $P'\in \Min T$ such that $P'\subsetneq P$. If we extend $\mathcal{C}_T$ to contain $P'$, then this new chain will have length $n+1$ and its image under the intersection map with $A$ will also be $\mathcal{C}_A$. Then Lemma \ref{killinglongchains} implies that $\mathcal{C}_A$ is not saturated, a contradiction. So, we must have $P\in \Min T$. Additionally, by a similar argument using Lemma \ref{killinglongchains}, $\mathcal{C}_T$ is saturated. Therefore, as $T$ is catenary, $P$ is a minimal prime ideal of $T$ such that $\dim (T/P)=n$.
\end{proof}

With the above lemmas, we are now ready to prove the main theorem of this section.

\begin{theorem}\label{iff}
Let $(T,M)$ be a complete local ring. Then $T$ is the completion of a noncatenary local domain $A$ if and only if the following conditions hold:
\begin{enumerate}
\item[(i)] No integer of $T$ is a zero divisor,
\item[(ii)] $M\notin \Ass T$, and
\item[(iii)] There exists $P\in \Min T$ such that $1< \dim (T/P) < \dim T$.
\end{enumerate}
\end{theorem}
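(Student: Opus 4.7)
Conditions (i) and (ii) follow from Theorem \ref{lech} applied to $A$. For (iii), noncatenarity forces $\dim A \geq 2$ (since every local ring of dimension at most $1$ is catenary), and $\dim A = \dim T$. Concatenating a noncatenarity witness $P \subsetneq P'$ in $A$ (with two saturated chains between of distinct lengths) with fixed saturated chains from $(0)$ to $P$ and from $P'$ to $M \cap A$ yields two saturated chains in $A$ from $(0)$ to $M \cap A$ of distinct lengths. Combined with the fact that a maximum-length (hence saturated) chain from $(0)$ to $M \cap A$ has length $\dim T$, this produces a saturated chain of some length $\ell$ with $2 \leq \ell < \dim T$. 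Lemma \ref{catoct} then supplies $P \in \Min T$ with $1 < \dim(T/P) < \dim T$.

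\textbf{Sufficiency.} Set $G = \{P \in \Spec T : P \subseteq Q \text{ for some } Q \in \Ass T\}$ and apply Theorem \ref{pippa}: $G$ is nonempty, downward closed, and contains $\Ass T$ by construction; its maximal elements are the (finitely many) maximal associated primes of $T$; (ii) gives $M \notin G$; and (i) ensures no nonzero integer lies in any associated prime, so each $Q \in G$ meets the prime subring trivially. The resulting local domain $A$ satisfies $\widehat{A} \cong T$, and Remark \ref{1to1} provides an inclusion-preserving bijection between $\Spec T \setminus G$ and $\Spec A \setminus \{(0)\}$. Since $\dim A = \dim T =: d$, a maximum-length chain in $A$ is a saturated chain of length $d$ from $(0)$ to $M \cap A$; it remains to produce a shorter saturated chain.

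Fix $P_0 \in \Min T$ with $1 < n := \dim(T/P_0) < d$ from (iii). I plan to build a saturated chain $P_0 \subsetneq Q_1 \subsetneq \cdots \subsetneq Q_{n-1} \subsetneq M$ in $T$ in which $Q_1 \notin G$ and $P_0$ is the unique minimal prime of $T$ contained in $Q_1$. The quotient $T/P_0$ is complete, equidimensional, and of dimension $n \geq 2$, hence universally catenary by Theorem \ref{matsumura}; in particular it has infinitely many height-$1$ primes. A prime-avoidance argument inside the local domain $T/P_0$, excluding (a) the finitely many non-maximal primes that are images of embedded associated primes of $T$ strictly above $P_0$, and (b) the finitely many height-$1$ primes of $T/P_0$ containing the image of any other minimal prime of $T$, yields infinitely many admissible height-$1$ primes; lifting such a prime back to $T$ supplies $Q_1$. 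For $i \geq 2$, choose $Q_i$ as any height-$1$ prime of the complete equidimensional catenary ring $T/Q_{i-1}$; catenarity gives $\dim(T/Q_i) = n - i$, so the resulting chain has length exactly $n$ and $Q_{n-1} \subsetneq M$ is automatic. Moreover $Q_1 \subseteq Q_i$ and $Q_1 \notin G$ together force $Q_i \notin G$ for all $i \geq 1$.

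Intersecting this $T$-chain with $A$ gives $(0) \subsetneq Q_1 \cap A \subsetneq \cdots \subsetneq Q_{n-1} \cap A \subsetneq M \cap A$, with strictness provided by Remark \ref{1to1} applied to the distinct primes $Q_1, \ldots, Q_{n-1}, M \notin G$. Saturation at every step $i \geq 1$ holds because any prime of $A$ strictly between $Q_i \cap A$ and $Q_{i+1} \cap A$ would lift, via the inverse map $P \mapsto PT$ of Remark \ref{1to1}, to a prime in $\Spec T \setminus G$ strictly between $Q_i$ and $Q_{i+1}$, contradicting saturation in $T$ (and the analogous argument handles the final step to $M \cap A$). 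For the initial step $(0) \subsetneq Q_1 \cap A$, any strictly intermediate prime in $A$ lifts to $P^* \in \Spec T \setminus G$ with $P^* \subsetneq Q_1$; since $P^*$ must contain some minimal prime of $T$, and $P_0$ is the only such prime contained in $Q_1$, the saturation of $P_0 \subsetneq Q_1$ in $T$ forces $P^* = P_0 \in G$, a contradiction. Thus $A$ admits saturated chains of lengths $n$ and $d$ from $(0)$ to $M \cap A$, so $A$ is noncatenary. The principal obstacle is securing $Q_1 \notin G$, a condition strictly stronger than the $Q_1 \notin \Ass T$ delivered by Lemma \ref{chainconstruction}; the prime-avoidance argument in $T/P_0$ above is exactly what is needed.
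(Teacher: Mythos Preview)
Your necessity argument is correct and essentially matches the paper's. For sufficiency you take a different route from the paper: rather than deducing noncatenarity from the single inequality $\height(Q_{n-1}\cap A)+\dim\bigl(A/(Q_{n-1}\cap A)\bigr)<\dim A$, you attempt to transport the \emph{entire} $T$-chain into $A$ as a saturated chain of length $n$. This forces you to secure $Q_1\notin G$ (so that every $Q_i\notin G$), whereas the paper only needs $Q_{n-1}\notin G$, which it gets for free: since $\dim(T/Q_{n-1})=1$, the only primes containing $Q_{n-1}$ are $Q_{n-1}$ and $M$, and Lemma~\ref{chainconstruction} together with hypothesis (ii) guarantees neither lies in $\Ass T$.

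There is a genuine gap in how you obtain $Q_1\notin G$. Your exclusion (a) removes only those height-$1$ primes of $T/P_0$ that are \emph{equal to} images of embedded associated primes $Q\supsetneq P_0$. But $Q_1\in G$ means $Q_1$ is \emph{contained in} some $Q\in\Ass T$; if there exists an embedded $Q\in\Ass T$ with $P_0\subsetneq Q$ and $\height(Q/P_0)\ge 2$, then infinitely many height-$1$ primes of $T/P_0$ lie below $Q/P_0$, and each of them lifts to a prime in $G$. So the bad set for (a) need not be finite, and the ``infinitely many minus finitely many'' count fails. The repair is to run prime avoidance on \emph{elements} first: choose a nonzero $x\in M/P_0$ outside every $Q/P_0$ with $Q\in\Ass T$, $Q\supsetneq P_0$ (finitely many non-maximal primes, since $M\notin\Ass T$), and also outside the finitely many height-$1$ minimal primes of each $(P'+P_0)/P_0$; then any height-$1$ prime of $T/P_0$ minimal over $(x)$ furnishes a valid $Q_1$. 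With this fix your argument goes through, but the paper's height--coheight shortcut at $Q_{n-1}$ is cleaner precisely because it sidesteps the embedded-prime issue altogether.
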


\begin{proof}
We first show that if $(T,M)$ is a complete local ring satisfying (i), (ii), and (iii), then $T$ is the completion of a noncatenary local domain $A$.

Using Theorem \ref{pippa} with $G=\{P\in \Spec T ~|~ P\subseteq Q \text{ for some } Q\in \Ass T\}$, we have that there exists a local domain $A$ whose completion is $T$ such that the set $\{P \in \Spec T ~|~ P \cap A = (0)\}$ is exactly the elements of $G$. Note that this $G$ satisfies the assumptions of Theorem \ref{pippa}. Furthermore, for this $A$, we have a one-to-one inclusion-preserving correspondence between nonzero $P\in \Spec A$ and $Q\in \Spec T\setminus G$ as described in Remark \ref{1to1}.

Let $P_0\in \Min T$ with $1 < m=\dim(T/P_0) < \dim T$, which exists by assumption. Using Lemma \ref{chainconstruction} we construct a saturated chain of prime ideals from $P_0$ to $M$ given by $P_0\subsetneq Q_1 \subsetneq Q_2 \subsetneq \dots \subsetneq Q_{m-1} \subsetneq M$, where the only minimal prime ideal contained in each $Q_i$ is $P_0$ and $Q_i \notin \Ass T$. Then, since $M\notin \Ass T$, we have that $Q_{m-1}\notin G$.

We now show that $A$ is noncatenary. Since the only minimal prime ideal contained in $Q_{m-1}$ is $P_0$, our chain is saturated, and $T$ is catenary, we have that $\height Q_{m-1} = m-1$ and $\dim(T/Q_{m-1})=1$. Then, since $Q_{m-1} \notin G$, we have $Q_{m-1} \cap A \neq (0)$. We claim that $\dim(A/(Q_{m-1} \cap A))=1$. To see this, suppose $P'\in\Spec A$ such that $Q_{m-1} \cap A \subsetneq P'$. Then by the one-to-one inclusion-preserving correspondence described in Remark \ref{1to1}, $Q_{m-1}\subsetneq P'T$. Since $Q_{m-1} \subsetneq M$ is saturated and $P'T\in \Spec T$, we must have that $P'T=M$. Therefore, $P'=P'T\,\cap\, A=M\cap A$ and it follows that $\dim(A/(Q_{m-1} \cap A))=1$. Since $\height(Q_{m-1} \cap A)\leq \height Q_{m-1}$ we have that $\height(Q_{m-1} \cap A)+\dim(A/(Q_{m-1} \cap A))\leq \height Q_{m-1}+\dim(T/Q_{m-1})=m<\dim T=\dim A$. Thus, $A$ is a noncatenary local domain whose completion is $T$.

Now, suppose that $T$ is the completion of a noncatenary local domain, $A$. The contrapositive of Theorem \ref{matsumura} implies that $T$ is nonequidimensional, and hence $\dim T=n>1$. Therefore, $T$ cannot be a field, so $M$ cannot be $(0)$. Additionally, by Theorem \ref{lech}, no integer of $T$ is a zero divisor and $M \notin \Ass T$. Since $A$ is noncatenary, there exists a saturated chain of prime ideals in $A$, call it $\mathcal{C}_A$, from $(0)$ to $M\cap A$ with length $m<n$. Since $\dim A =n>1$, we know that $(0)\subsetneq M\cap A$ is not a saturated chain. Thus, $m>1$, and consequently, $n> 2$. By Lemma \ref{catoct}, there exists $P \in \Min T$ such that $1<\dim(T/P) = m<n$, completing the proof. 
\end{proof}

\begin{remark} \label{quasi-excellent}
Let $(T,M)$ be a complete local ring satisfying conditions of (i), (ii), and (iii) of Theorem \ref{iff}, and let $A$ be the noncatenary local domain constructed in the proof of Theorem \ref{iff} whose completion is $T$. Then we claim that $A$ may, under certain circumstances, be quasi-excellent, even though it cannot be excellent. To show this, we first present the following definitions, adapted from \cite{rotthaus1997}:

\begin{defn}
A local ring $A$ is \textit{quasi-excellent} if, for all $P \in \Spec A$, the ring $\widehat{A} \otimes_A L$ is regular for every purely inseparable finite field extension $L$ of $k(P) = A_P / PA_P$. A local ring $A$ is \textit{excellent} if it is quasi-excellent and universally catenary. 
\end{defn}

To demonstrate our claim, we need to show that, for all $P \in \Spec A$ and for every purely inseparable finite field extension $L$ of $k(P)$, the ring $T \otimes_A L$ is regular. If $P \in \Spec A$ is nonzero, then, by Lemma 2.3, $T \otimes_A k(P) \cong k(P)$. Then we have $T \otimes_A L \cong T \otimes_A k(P) \otimes_{k(P)} L \cong k(P) \otimes_{k(P)} L \cong L$, a field. So in this case, $T \otimes_A L$ is regular. 

Now $A$ will be quasi-excellent if and only if $T \otimes_A L$ is regular for all purely inseparable finite field extensions $L$ of $k((0))$. For example, suppose the characteristic of $k((0))$ is zero and suppose $T_Q$ is a regular local ring for all $Q \in G$ where $G=\{P\in \Spec T ~|~ P\subseteq Q \text{ for some } Q\in \Ass T\}$. Note that $T \otimes_A k((0)) \cong S^{-1}T$, where $S = A \setminus (0)$. The prime ideals of $S^{-1}T$ are in one-to-one correspondence with the set $\{Q \in \Spec T ~|~ Q \cap A = (0)\} = G$. So, to show that $S^{-1}T$ is regular, it suffices to show that $(S^{-1}T)_Q \cong T_Q$ is a regular local ring for all $Q \in G$. But we assumed this to be true, so $A$ is quasi-excellent. Of course, $A$ cannot be excellent as it is noncatenary. $\clubsuit$
\end{remark}

We now use Remark \ref{quasi-excellent} to give a specific example of a quasi-excellent noncatenary local domain.

\begin{example} \label{noncatDomainExample}
Let $T=\dfrac{K \llbracket x,y,z,v\rrbracket}{(x)\cap (y,z)}$, where $K$ is a field of characteristic zero and $x, y, z, \text{and } v$ are indeterminates. Let $x,y,z, \text{and } v$ represent their corresponding images in $T$. Then $T$ satisfies conditions (i), (ii), and (iii) of Theorem \ref{iff} since $\Ass T = \{(x),(y,z)\}$ and $\dim(T/(y,z))=2<\dim T=3$. So, let $A$ be the noncatenary local domain constructed as in the proof of Theorem \ref{iff}. Then $G = \{Q \in \Spec T ~|~ Q \cap A = (0)\}=\Ass T$ and $T_{(x)}$ and $T_{(y,z)}$ are both regular local rings. Therefore, by Remark \ref{quasi-excellent}, $A$ is a quasi-excellent noncatenary local domain such that $\widehat{A}\cong T$.
\end{example}

In the next example we construct a class of catenary, but not universally catenary, local domains.

\begin{example} \label{catDomainExample}
Let $T=\dfrac{K \llbracket x,y_1,\dots, y_n\rrbracket}{(x)\cap (y_1,\dots, y_n)}$ where $K$ is a field, $x, y_1, \dots, y_n$ are indeterminates, and $n>1$. By Theorem \ref{lech}, we know that there exists a local domain, $A$, whose completion is $T$. Observe that $T$ contains only two minimal prime ideals, $P_1$ and $P_2$, where $\dim (T/P_1)=n$ and $\dim (T/P_2)=1$. Thus, $T$ does not satisfy condition (iii) of Theorem \ref{iff}, which implies that any such $A$ must be catenary. Additionally, Theorem 31.7 in \cite{matsumura1989commutative} states that a local ring is universally catenary if and only if $\widehat{A/P}$ is equidimensional for every $P \in \Spec A$. But since $A$ is an integral domain, we have $(0) \in \Spec A$, and $\widehat{A/(0)} = \widehat{A} \cong T$ which is nonequidimensional. Therefore, $A$ is not universally catenary.
\end{example}

\section{Characterizing Completions of Noncatenary Local UFDs}

\subsection{Background}

In this section, we find necessary and sufficient conditions for a complete local ring to be the completion of a noncatenary local unique factorization domain. Conditions (i), (ii), and (iii) of Theorem \ref{iff} will, of course, be necessary conditions. We begin by presenting a few previous results that will be useful in the proof of this section's main theorem.

The following theorem, essentially taken from \cite{heitmann1993characterization}, provides necessary and sufficient conditions for a complete local ring to be the completion of a local UFD.   

\begin{theorem}
\label{heitmann}
Let $(T,M)$ be a complete local ring. Then $T$ is the completion of a unique factorization domain if and only if it is a field, a discrete valuation ring, or it has depth at least two and no element of its prime subring is a zero divisor.
\end{theorem}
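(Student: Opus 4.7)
The plan is to split into the two directions and, for the forward direction, to argue by cases on $\dim A$. If $T=\widehat{A}$ for a local UFD $A$, then when $\dim A=0$, $A$ is an Artinian local domain, hence a field, so $T=A$ is a field; and when $\dim A=1$, $A$ is a one-dimensional local UFD, therefore regular, i.e., a DVR, and its completion $T$ is again a DVR. This handles the first two alternatives.

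For the remaining case $\dim A\geq 2$, I would verify the two conditions separately. Since $A$ is a domain and $A\hookrightarrow T$ is faithfully flat, multiplication by any nonzero element of $A$ is injective on $T$; in particular no nonzero element of the prime subring of $T$ (which sits inside $A$) is a zero divisor. For the depth inequality, it suffices to show $\dep A\geq 2$, since depth is preserved under completion. Fix any nonzero $a$ in the maximal ideal of $A$. By Krull's principal ideal theorem, every $P\in\Ass(A/(a))$ has $\height P=1$, and because $A$ is a UFD each such $P$ is principal, of the form $(p_i)$ for some prime factor $p_i$ of $a$. If the maximal ideal of $A$ were contained in the union of these associated primes, then by prime avoidance it would equal some $(p_i)$, forcing $\dim A=1$, a contradiction. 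So some $b$ in the maximal ideal is a non-zero-divisor on $A/(a)$, giving a regular sequence $a,b$ of length two.

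For the reverse direction, if $T$ is a field or a DVR, then $T$ itself is a UFD and its own completion, so take $A=T$. Otherwise $\dep T\geq 2$, no element of the prime subring of $T$ is a zero divisor, and I would invoke Heitmann's construction: build $A$ as an ascending union of countable subrings of $T$, at each stage adjoining carefully chosen elements in order to arrange simultaneously that (a) the inclusion $A\hookrightarrow T$ induces $\widehat{A}\cong T$, (b) every height-one prime of $A$ is principal, and (c) the prime subring of $T$ embeds in $A$. The role of the depth hypothesis is to guarantee that the prime-avoidance steps needed to locate principal generators for height-one primes do not exhaust the maximal ideal prematurely, while the no-zero-divisor hypothesis is what allows the prime subring to embed faithfully.

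The main obstacle, and the reason this direction is substantive, is maintaining the UFD property and the completion structure in tandem throughout the construction: each adjustment that principalizes one height-one prime threatens to create a new nonprincipal height-one prime or to disturb the agreement between $\widehat{A}$ and $T$, and threading between these competing requirements at every stage is where the real work of Heitmann's argument lies.
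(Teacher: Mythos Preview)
The paper's own proof is a bare citation: it simply invokes Theorems~1 and~8 of Heitmann's 1993 paper, the former handling necessity and the latter sufficiency. Your proposal goes further on the forward direction by actually supplying an argument, which is essentially the content of Heitmann's Theorem~1. One small imprecision: Krull's principal ideal theorem bounds the height only of \emph{minimal} primes over $(a)$, not of all associated primes. The reason every $P\in\Ass(A/(a))$ has height one is rather that in a UFD the factorization $a=u\prod p_i^{e_i}$ gives the primary decomposition $(a)=\bigcap_i(p_i^{e_i})$, so the associated primes are exactly the principal primes $(p_i)$, each of height one. (Equivalently, a Noetherian UFD is normal and hence satisfies Serre's condition $(S_2)$, which yields $\dep A\geq 2$ directly once $\dim A\geq 2$.) With that adjustment your forward direction is complete and more informative than the paper's citation.

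For the reverse direction you, like the paper, ultimately defer to Heitmann's construction (his Theorem~8). Your high-level description of that construction is accurate in spirit, but it is a sketch rather than a proof, and the paper does not attempt one either. So on sufficiency the two treatments are effectively the same.
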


\begin{proof}
The result follows from Theorem 1 and Theorem 8 in \cite{heitmann1993characterization}.
\end{proof}

We will also use the following generalization of the Prime Avoidance Lemma to find ring elements that satisfy a certain transcendental property.

\begin{lemma}\textnormal{(\hspace{1sp}\cite[Lemma 2]{heitmann1993characterization})} \label{heitmannlemma2}
Let $(T,M)$ be a complete local ring, $C$ be a countable set of prime ideals in $\Spec T$ such that $M \notin C$ and $D$ be a countable set of elements of $T$. If $I$ is an ideal of $T$ which is contained in no single $P$ in $C$, then $I \not\subseteq \bigcup \{r+P ~|~ P \in C,~ r \in D\}$.
\end{lemma}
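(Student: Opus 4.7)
The plan is to deduce this lemma from the Baire category theorem applied to $T$ (and its closed subset $I$) under the $M$-adic topology. First I would observe that $T$ becomes a complete metric space in this topology: Krull's intersection theorem gives $\bigcap_n M^n = (0)$, so the $M$-adic metric $d(x,y) = 2^{-\sup\{n \,:\, x - y \in M^n\}}$ is well-defined, and $M$-adic completeness of $T$ is precisely completeness as a metric space. The ideal $I$ is closed in this topology, since applying Krull's intersection theorem to $T/I$ yields $\bigcap_n(I + M^n) = I$, so $I$ inherits a complete metric and is therefore a Baire space.

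The heart of the argument is to show that for each $r \in D$ and each $P \in C$, the set $(r + P) \cap I$ is nowhere dense in $I$. Since $r+P$ is a translate of the closed ideal $P$, it is closed, hence so is $(r+P) \cap I$, and it suffices to prove empty interior. The key fact I would establish is that $I \cap M^n \not\subseteq P$ for every $n \geq 1$: by hypothesis there exists $b \in I \setminus P$; if $I = T$ then $I \cap M^n = M^n$ and $M^n \not\subseteq P$ follows from $M \not\subseteq P$ (otherwise primality would force $P = M$, contradicting $P \neq M$); otherwise $b \in M$, and primality of $P$ gives $b^n \in (I \cap M^n) \setminus P$. Given this, for arbitrary $a \in I$ and $n \geq 1$ I would pick $y \in (I \cap M^n) \setminus P$ and examine $a + y$ in the basic neighborhood $(a + M^n) \cap I$. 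If $a \in r + P$, then $(a + y) - r = (a - r) + y$ is a sum of an element of $P$ and an element outside $P$, so $a + y \notin r + P$; if $a \notin r + P$, then $a$ itself is the required witness. Either way, no neighborhood of $a$ in $I$ is contained in $r + P$, so $(r + P) \cap I$ has empty interior.

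With this in hand, the conclusion follows immediately. Since $C$ and $D$ are countable, the union $\bigcup\{(r + P) \cap I \,:\, r \in D,\; P \in C\}$ is a countable union of closed nowhere-dense subsets of the Baire space $I$; by the Baire category theorem this union is a proper subset of $I$, so there exists $x \in I$ outside every coset $r + P$, which is exactly the claim. The only step I expect to require care is the nowhere-density assertion; as sketched it reduces cleanly to the elementary fact that $I \cap M^n \not\subseteq P$, which follows quickly from $I \not\subseteq P$ together with $P$ being prime and different from $M$.
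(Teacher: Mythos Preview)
The paper does not supply its own proof of this lemma; it is quoted verbatim from Heitmann's 1993 paper and used as a black box, so there is nothing in the present paper to compare against line by line.

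Your Baire category argument is correct and is in fact the standard way to prove this kind of ``countable prime avoidance with cosets'' over a complete local ring. The only places worth a second look are exactly the ones you flagged, and they hold: Krull's intersection theorem in $T/I$ gives closedness of $I$; a closed subset of the complete metric space $T$ is itself complete, hence Baire; and your verification that $I\cap M^n\not\subseteq P$ splits correctly into the cases $I=T$ (use $M\not\subseteq P$ since $P\neq M$ and $P$ is prime) and $I\subsetneq T$ (then $I\subseteq M$, so any $b\in I\setminus P$ has $b^n\in (I\cap M^n)\setminus P$). From this the nowhere-density of each $(r+P)\cap I$ in $I$ follows exactly as you wrote, and Baire finishes it. Heitmann's original argument is of the same flavor; your write-up would serve as a perfectly good substitute for the citation.
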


The construction in \cite{heitmann1993characterization} involves adjoining carefully-chosen transcendental elements to a subring while ensuring certain properties are maintained. A ring satisfying these properties is called an \textit{N-subring}, and was first defined in \cite{heitmann1993characterization}. Since we will be interested in maintaining those same properties, we present the definition of an \textit{N-subring}, where a quasi-local ring denotes a ring with one maximal ideal that is not necessarily Noetherian.

\begin{defn}
\label{Nsubring}
Let $(T,M)$ be a complete local ring and let $(R, M \cap R)$ be a quasi-local unique factorization domain contained in $T$ satisfying:
\begin{enumerate}
	\item[(i)] $|R| \leq \sup(\aleph_0, |T/M|)$ with equality only if $T/M$ is countable,
    \item[(ii)] $Q \cap R = (0)$ for all $Q \in \Ass(T)$, and
    \item[(iii)] If $t \in T$ is regular and $P \in \Ass(T/tT)$, then $\height(P \cap R) \leq 1$. 
\end{enumerate}
Then $R$ is called an \textit{N-subring} of $T$.
\end{defn}

We will also make use of the following lemma in our construction. It allows us to adjoin elements to an N-subring in such a way that the resulting ring is also an N-subring.

\begin{lemma}\textnormal{(\hspace{1sp}\cite[Lemma 11]{loepp1997constructing})} \label{loepp11}
Let $(T,M)$ be a complete local ring, $R$ be an N-subring of $T$, and $C \subset \Spec T$ such that $M \notin C$, $\Ass T \subset C$, and $\{P\in \Spec T ~|~ P\in \Ass(T/rT), 0 \neq r\in R\} \subset C$. Suppose $x\in T$ is such that, for every $P\in C$, $x\notin P$ and $x+P$ is transcendental over $R/(P\cap R)$ as an element of $T/P$. Then $S = R[x]_{(M\cap R[x])}$ is an N-subring with $R \subsetneq S$ and $|S|=\sup(\aleph_0, |R|)$.
\end{lemma}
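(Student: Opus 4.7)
My plan is to verify each condition of Definition~\ref{Nsubring} for $S$, along with the statements $R \subsetneq S$ and $|S|=\sup(\aleph_0,|R|)$. Since $\Ass T \subseteq C$ is nonempty, picking any $Q \in \Ass T$ gives transcendence of $x+Q$ over $R/(Q \cap R) = R$ (using $Q \cap R = (0)$ from the N-subring property of $R$), which forces $x \notin R$ and yields strict inclusion. Polynomial counting gives $|R[x]| \leq \sup(\aleph_0,|R|)$, and since localization does not enlarge cardinality, we obtain the cardinality identity; condition (i) for $S$ then transfers from the corresponding property for $R$ by the idempotence of $\sup(\aleph_0,\cdot)$. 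For condition (ii), given $Q \in \Ass T$ it suffices to show $Q \cap R[x] = (0)$: if $f(X)=\sum_i a_i X^i \in R[X]$ satisfies $f(x) \in Q$, then $\sum_i (a_i+Q)(x+Q)^i=0$ in $T/Q$, and since $R$ injects into $T/Q$ (as $Q \cap R=(0)$) and $x+Q$ is transcendental over $R$ (as $Q \in C$), each $a_i=0$.

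Condition (iii) is the technical core. I fix a regular $t \in T$ and $P \in \Ass(T/tT)$ and aim to show $\height(P \cap S) \leq 1$, which reduces to bounding $\height(P \cap R[x])$ in $R[x]$ since $S$ is a localization of $R[x]$. The key ingredients are that $R[x]$ is a UFD by Gauss's Lemma, and that $\mathfrak{p} := P \cap R$ has height at most $1$ by the N-subring property for $R$. If $\mathfrak{p}=(0)$, then primes of $R[x]$ lying over $(0)$ correspond, via extension to the fraction field $K$ of $R$, to primes of the one-dimensional ring $K[x]$, giving $\height(P \cap R[x]) \leq 1$. If $\mathfrak{p}=\pi R$ for a prime $\pi \in R$, then $\pi R[x]$ is a height-$1$ prime contained in $P \cap R[x]$, and the bound reduces to establishing the reverse inclusion $P \cap R[x] \subseteq \pi R[x]$.

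To prove this inclusion, I take $f(X)=\sum_i a_i X^i \in P \cap R[X]$ and study $\sum_i(a_i+\mathfrak{p})(x+P)^i=0$ in $T/P$: transcendence of $x+P$ over $R/\mathfrak{p}$ would force each $a_i \in \pi R$, giving $f \in \pi R[X]$. Transcendence is immediate when $P \in C$, and when $P \notin C$ it is bridged via $\pi$ itself: since $\pi \in R$ is regular in $T$ by condition (ii) on $R$, some minimal prime $Q$ of $\pi T$ lies in $P$, and this $Q$ belongs to $\Ass(T/\pi T) \subseteq C$ with $Q \cap R = \pi R$ by the height-$1$ bound. Transcendence of $x+Q$ over $R/\pi R$ yields $Q \cap R[X]=\pi R[X]$, and the remaining task is to transfer this conclusion from $Q$ up to $P$ by examining the quotient $(P \cap R[X])/\pi R[X]$ inside $(R/\pi R)[X]$. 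This transfer step is the main obstacle, requiring careful combination of the UFD structure of $R[X]$, the classification of primes of $(R/\pi R)[X]$ lying over $(0)$ (either zero or principal generated by an irreducible polynomial), and the finiteness of $\Ass(T/tT)$.
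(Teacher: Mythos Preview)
The paper does not supply a proof of this lemma: it is quoted from \cite{loepp1997constructing} and used as a black box, so there is no in-paper argument to compare against.

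Assessing your outline on its own merits: the verification of $R\subsetneq S$, the cardinality claim, condition (i), and condition (ii) are all correct. Your treatment of condition (iii) is also correct when $\mathfrak{p}=P\cap R=(0)$, and when $P\in C$ (transcendence of $x+P$ over $R/\mathfrak{p}$ immediately gives $P\cap R[x]=\mathfrak{p}R[x]$). The gap is in the remaining case, $P\notin C$ with $\mathfrak{p}=\pi R$ of height~$1$. You correctly produce $Q\in\Ass(T/\pi T)\subseteq C$ with $Q\subseteq P$ and $Q\cap R[x]=\pi R[x]$, but the ``transfer from $Q$ up to $P$'' that you flag as the main obstacle does not go through with the ingredients you list. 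Knowing that $(P\cap R[x])/\pi R[x]$ is a prime of $(R/\pi R)[X]$ lying over $(0)$ only bounds its height by $1$ in $(R/\pi R)[X]$, which yields $\height(P\cap R[x])\le 2$, not $\le 1$; and the finiteness of $\Ass(T/tT)$ does not visibly help.

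The clean way to close the gap is to show this case is vacuous. If $P\in\Ass(T/tT)$ with $t$ regular, then localizing gives $PT_P\in\Ass(T_P/tT_P)$, so $t$ is a maximal $T_P$-regular sequence and $\dep T_P=1$. If in addition $0\ne\pi\in P\cap R$, then $\pi$ is $T$-regular (by condition (ii) for the N-subring $R$), hence $T_P$-regular, so $\pi$ is also a maximal $T_P$-regular sequence. Thus $PT_P\in\Ass(T_P/\pi T_P)$, which forces $P\in\Ass(T/\pi T)$; since $0\ne\pi\in R$, this places $P\in C$. Hence $P\cap R\ne(0)$ already implies $P\in C$, and transcendence of $x+P$ over $R/\pi R$ gives $P\cap R[x]=\pi R[x]$ directly, with no transfer step needed.
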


\subsection{The Characterization}

We start by describing the main idea of our proof for characterizing completions of noncatenary local UFDs. Let $(T,M)$ be a complete local ring such that $\dep T >1$ and no integer of $T$ is a zero divisor. Our goal is to find sufficient conditions to construct a noncatenary local UFD, $A$, such that $\widehat{A}\cong T$. First, we note that if $R$ is the prime subring of $T$ localized at $M \cap R$, then $R$ is an N-subring. Now, suppose there exists $Q \in \Spec T$ such that $\dim(T/Q)=1$, $\height Q +\dim(T/Q)<\dim T$, and $\dep T_Q >1$. Then, in Lemma \ref{backwardufd}, we show that it is possible to adjoin appropriate elements of $Q$ to $R$ to obtain an N-subring, $S$, such that, if we apply the proof of Theorem 8 in \cite{heitmann1993characterization} to $S$, the resulting $A$ is a local UFD satisfying $(Q \cap A)T=Q$. We then prove that $A$ is noncatenary. Additionally, in Theorem \ref{iffufd}, we prove that our conditions are necessary.

First, we prove the following lemma, which allows us to simplify the statement of the main theorem of this section.

\begin{lemma}\label{simplifyconditions}
Let $(T,M)$ be a catenary local ring with $\dep T >1$. Then the following are equivalent:
\begin{enumerate}
\item[(i)] There exists $Q\in \Spec T$ such that $\dim(T/Q)=1$, $\height Q + \dim(T/Q)<\dim T$, and $\dep T_Q >1$.
\item[(ii)] There exists $P\in\Min T$ such that $2<\dim(T/P)<\dim T$.
\end{enumerate}
\end{lemma}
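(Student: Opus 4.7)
The argument splits into two implications.

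For (i) $\Rightarrow$ (ii), I would start with $Q$ satisfying (i). Since $\dep T_Q > 1$, $\height Q \geq \dep T_Q \geq 2$, so by the definition of height together with catenary there is $P \in \Min T$ with $P \subseteq Q$ admitting a saturated chain from $P$ to $Q$ of length $\height Q$. Appending the step $Q \subsetneq M$, which is saturated because $\dim(T/Q) = 1$, gives a saturated chain from $P$ to $M$ of length $\height Q + 1$, and catenary forces every saturated chain from $P$ to $M$ to have this same length, so $\dim(T/P) = \height Q + 1$. Thus $\dim(T/P) \geq 3 > 2$, and $\dim(T/P) = \height Q + \dim(T/Q) < \dim T$ by hypothesis, establishing (ii).

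For (ii) $\Rightarrow$ (i), given $P \in \Min T$ with $2 < n = \dim(T/P) < \dim T$, I would apply Lemma \ref{chainconstruction} to obtain a saturated chain $P \subsetneq Q_1 \subsetneq \cdots \subsetneq Q_{n-1} \subsetneq M$ with $P$ the unique minimal prime of $T$ below each $Q_i$ and each $Q_i \notin \Ass T$. Setting $Q = Q_{n-1}$, saturation of the last step yields $\dim(T/Q) = 1$, and catenary combined with the ``only $P$ minimal below'' property yields $\height Q = n-1$, hence $\height Q + \dim(T/Q) = n < \dim T$. Three of the four conditions in (i) are then in hand.

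The remaining step, $\dep T_Q > 1$, is the main obstacle. My plan is to exploit $\dep T > 1$ by fixing a regular sequence $x_1, x_2 \in M$ and arranging the chain so that $(x_1, x_2) T \subseteq Q$, which immediately forces $\dep T_Q \geq 2$ since $x_1, x_2$ remains a regular sequence in $T_Q$. Concretely, I would use Prime Avoidance (aided by the fact that $\dep T > 1$ keeps $M$ out of both $\Ass T$ and $\Ass(T/x_1 T)$) to choose $x_1, x_2$ so that their images in the complete local domain $T/P$ still form a regular sequence and no minimal prime of the ideal $P + (x_1, x_2) T$ contains another minimal prime of $T$. Any such minimal prime $\mathfrak{q}$ then has $\height \mathfrak{q} = 2$ with only $P$ below, so it can serve as $Q_2$ in the chain; the chain is then completed from $\mathfrak{q}$ up to $M$ by applying Lemma \ref{chainconstruction} inside the complete local domain $T/\mathfrak{q}$ of dimension $n - 2$, while also avoiding the finitely many primes of $T$ strictly between $\mathfrak{q}$ and $M$ that lie in $\Ass T$. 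The main difficulty is juggling these Prime Avoidance conditions simultaneously to produce $x_1, x_2$ and $\mathfrak{q}$ with the required properties; this is precisely the bookkeeping that the purely dimensional reformulation (ii) is designed to bypass in later applications.
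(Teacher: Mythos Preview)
Your argument for (i) $\Rightarrow$ (ii) is correct and essentially matches the paper's.

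For (ii) $\Rightarrow$ (i), you and the paper both begin by applying Lemma \ref{chainconstruction} to obtain a saturated chain $P \subsetneq Q_1 \subsetneq \cdots \subsetneq Q_{n-1} \subsetneq M$ with only $P$ minimal below each $Q_i$ and each $Q_i \notin \Ass T$, and both deduce $\dim(T/Q_{n-1})=1$ and $\height Q_{n-1}+1=n<\dim T$. The divergence is in how to secure $\dep T_Q > 1$, and here your plan has a genuine gap. You want a $T$-regular sequence $x_1, x_2$ whose images in $T/P$ still form a regular sequence, but this requires $\dep(T/P) \geq 2$, and nothing in the hypotheses gives that. If $\dep(T/P) = 1$ (which is not excluded), then for \emph{every} $T$-regular $x_1 \in M$ one has $M \in \Ass\bigl(T/(P + x_1 T)\bigr)$, so no $x_2 \in M$ can be regular on $(T/P)/x_1(T/P)$. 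The Prime Avoidance bookkeeping you describe is therefore not merely tedious; in this situation it is impossible, and your construction of a height-two prime $\mathfrak q$ over $P$ containing $(x_1,x_2)$ breaks down.

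The paper avoids this by working from the top of the chain rather than the bottom, and by finding only \emph{one} global regular element. Since $\dep T > 1$, every $P' \in \Ass T$ satisfies $\dim(T/P') \geq 2$; combined with catenarity and $\dim(T/Q_{n-2}) = 2$, this forces $Q_{n-2} \not\subseteq P'$ for all $P' \in \Ass T$, so $Q_{n-2}$ contains a $T$-regular element $x$. One then re-runs the argument of Lemma \ref{chainconstruction} only at the last step, choosing $Q'$ with $Q_{n-2}\subsetneq Q'\subsetneq M$ saturated, $P$ the unique minimal prime below, $Q'\notin\Ass T$, and additionally $Q' \notin \Ass(T/xT)$ (one more finite set to avoid). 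Then $x$ is $T_{Q'}$-regular and $Q'T_{Q'} \notin \Ass(T_{Q'}/xT_{Q'})$, so a second regular element exists in $T_{Q'}$, giving $\dep T_{Q'} \geq 2$. The key point is that the second element of the regular sequence is produced locally in $T_{Q'}$, so no hypothesis on $\dep(T/P)$ is ever needed.
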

\begin{proof}
Suppose condition (i) holds for $Q\in\Spec T$, and let $P\in \Min T$ be such that $P\subseteq Q$ and $\dim(T/P) = \height Q + 1 < \dim T$. If $Q\in\Min T$, then $\dep T_Q=0$, so it must be the case that $P\subsetneq Q$. It suffices to show that $\dim(T/P)>2$. Now $\dim(T/P)>\dim(T/Q)=1$, so suppose $\dim(T/P)=2$. Then we have $\dim T_Q =1$, which implies that $\dep T_Q \leq 1$, contradicting our assumption. Therefore, $\dim(T/P)>2$. 

Now suppose condition (ii) holds for $P \in \Min T$ with $2 < \dim(T/P)=n < \dim T$. By Lemma \ref{chainconstruction}, there exists a saturated chain of prime ideals in $T$ given by $P \subsetneq Q_1 \subsetneq \dots \subsetneq Q_{n-1} \subsetneq M$ such that, for $i = 1, \dots, n-1$, $P$ is the only minimal prime ideal of $T$ contained in $Q_i$ and $Q_i \notin \Ass T$. This ensures that $\height Q_i + \dim(T/Q_i)=n<\dim T$ for each $i=1,\dots,n-1$. Note that, as a consequence of Theorem 17.2 in \cite{matsumura1989commutative}, we have $\dep T\leq \min \{\dim(T/P) ~|~ P \in \Ass T\}$. Since we have $\dep T>1$, this means that any $Q\in \Spec T$ such that $\dim(T/Q)=1$ satisfies $Q\notin \Ass T$. Therefore, $Q_{n-2}$ is not contained in any associated prime ideal of $T$, so we can find a $T$-regular element $x \in Q_{n-2}$. 

We will replace $Q_{n-1}$ in our chain with a prime ideal $Q'$ that satisfies condition (i). By the same argument used in the proof of Lemma \ref{chainconstruction}, we can now choose $Q' \in \Spec T$ such that $Q_{n-2} \subsetneq Q' \subsetneq M$ is saturated, $P$ is the only minimal prime ideal contained in $Q'$, and $Q' \notin \Ass T$. We can additionally choose $Q' \notin \Ass(T/xT)$ since $\Ass(T/xT)$ is a finite set. Observe that, since $x \in Q'$ is $T$-regular and $T_{Q'}$ is a flat extension of $T$, $x$ is $T_{Q'}$-regular.  We now find a regular element on  $T_{Q'}/xT_{Q'}$ to obtain a $T_{Q'}$-regular sequence of length 2. Since $Q' \notin \Ass(T/xT)$, by the corollary to Theorem 6.2 in \cite{matsumura1989commutative}, $Q'T_{Q'} \notin \Ass(T_{Q'}/xT_{Q'})$. Therefore, there exists $y \in Q'T_{Q'}$ which is a regular element on  $T_{Q'}/xT_{Q'}$. Thus, $x,y$ is a $T_{Q'}$-regular sequence of length 2. So, we have shown that $\dep T_{Q'} > 1$, which completes the proof. 
\end{proof}

\begin{lemma} \label{backwardufd}
Let $(T,M)$ be a complete local ring such that no integer of $T$ is a zero divisor. Suppose $\dep T >1$ and there exists $P\in \Min T$ such that $2 < \dim(T/P) < \dim T$. Then $T$ is the completion of a noncatenary local UFD.
\end{lemma}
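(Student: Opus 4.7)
The plan is to run Heitmann's UFD construction starting not from the prime subring of $T$ but from an enlarged N-subring $S$ that is already large enough to ``capture'' a well-chosen prime ideal $Q$ of $T$, in the sense that the resulting UFD $A$ will satisfy $(Q \cap A)T = Q$. Noncatenarity of $A$ will then follow immediately from $\height Q + \dim(T/Q) < \dim T$.

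First, I will apply Lemma \ref{simplifyconditions}, direction (ii) $\Rightarrow$ (i): the given minimal prime $P$ with $2 < \dim(T/P) < \dim T$ and the hypothesis $\dep T > 1$ yield a prime ideal $Q \in \Spec T$ with $\dim(T/Q) = 1$, $\height Q + \dim(T/Q) < \dim T$, $\dep T_Q > 1$, and $Q \notin \Ass T$ (the latter is built into the construction via Lemma \ref{chainconstruction}). Let $R_0$ be the prime subring of $T$ localized at its intersection with $M$; this is an N-subring of $T$, because no integer is a zero divisor (giving condition (ii) of Definition \ref{Nsubring}) and because $\dim R_0 \leq 1$ (giving condition (iii)).

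Second, I will iteratively apply Lemma \ref{loepp11} to enlarge $R_0$ to an N-subring $S$ containing a fixed finite generating set $q_1, \ldots, q_n$ of the ideal $Q$. To adjoin $q_i$ I use the standard decomposition $q_i = x + (q_i - x)$ and adjoin two transcendental pieces separately. At each stage, writing $R$ for the current N-subring, the obstruction set $C := \Ass T \cup \{P \in \Ass(T/rT) : 0 \neq r \in R\}$ is countable, and $M \notin C$ because $\dep T > 1$ forces $\dep(T/rT) \geq 1$ for every $T$-regular $r$. By Lemma \ref{heitmannlemma2}, a transcendental element of the required form can be found inside $Q$ provided $Q \not\subseteq P$ for every $P \in C$. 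Since $\dim(T/Q) = 1$, the only prime strictly containing $Q$ is $M$, which lies outside $C$; and $Q$ itself does not lie in $C$ because $Q \notin \Ass T$ and, for each $T$-regular $r$, the hypothesis $\dep T_Q > 1$ forces $Q T_Q \notin \Ass(T_Q/rT_Q)$, so $Q \notin \Ass(T/rT)$.

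Third, apply the proof of Theorem 8 of \cite{heitmann1993characterization} to $S$; this produces a local UFD $A$ with $S \subseteq A$ and $\widehat{A} \cong T$. Because that construction only ever adjoins new elements to $S$, the generators $q_1, \ldots, q_n$ remain in $A$, so $Q = (q_1,\ldots,q_n)T \subseteq (Q \cap A)T \subseteq Q$, giving $(Q \cap A)T = Q$. Faithful flatness makes $Q \cap A$ a nonzero prime of $A$, and completing $A/(Q \cap A)$ yields $T/Q$, so $\dim(A/(Q \cap A)) = \dim(T/Q) = 1$; combined with $\height(Q \cap A) \leq \height Q$ (by going-down), this gives
\[ \height(Q \cap A) + \dim(A/(Q \cap A)) \leq \height Q + \dim(T/Q) < \dim T = \dim A. \]
Concatenating a saturated chain in $A$ from $(0)$ to $Q \cap A$ with the saturated link $Q \cap A \subsetneq M \cap A$ yields a saturated chain from $(0)$ to $M \cap A$ of length strictly less than $\dim A$, so $A$ is noncatenary.

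The main obstacle is step two: arranging for the N-subring $S$ to contain the generators of $Q$ without destroying the N-subring property. This is precisely where the extra hypotheses on $Q$ are used, the depth condition $\dep T_Q > 1$ and the avoidance $Q \notin \Ass T$ together prevent $Q$ from being trapped inside any prime of the countable obstruction set $C$, which is exactly what Lemma \ref{heitmannlemma2} needs in order to let Lemma \ref{loepp11} run at every stage.
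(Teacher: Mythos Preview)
Your overall architecture matches the paper's proof exactly: produce the prime $Q$ via Lemma~\ref{simplifyconditions}, enlarge the base N-subring so that it contains a generating set for $Q$, run Heitmann's Theorem~8 construction from there, and read off noncatenarity from $(Q\cap A)T=Q$. Your verifications that $Q\notin C$ and $M\notin C$ are correct and are the same as the paper's.

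The gap is in your step two. The ``standard decomposition $q_i = x + (q_i-x)$'' does not do what you need. Suppose you use Lemma~\ref{heitmannlemma2} with $I=Q$ to find $x\in Q$ satisfying the hypotheses of Lemma~\ref{loepp11}, and adjoin $x$ to get $R_1=R[x]_{(M\cap R[x])}$. Now $q_i-x$ is a \emph{fixed} element, and you must check that it satisfies the hypotheses of Lemma~\ref{loepp11} with respect to $R_1$ and the enlarged obstruction set $C_1$. But nothing prevents some generator $q_i$ from lying in some $P\in C$ (the hypothesis $Q\not\subseteq P$ only says the generators are not \emph{all} in $P$). If $q_i\in P$, then $(q_i-x)+P=-x+P$, which lies in the image of $R_1$ in $T/P$ and is therefore algebraic over $R_1/(P\cap R_1)$; Lemma~\ref{loepp11} fails. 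More generally, whenever $q_i+P$ is algebraic over $R/(P\cap R)$, the images $x+P$ and $(q_i-x)+P$ are algebraically dependent, so the second adjunction is blocked regardless of how $x$ was chosen.

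The paper's remedy is not to insist on the original generators at all. Instead, at stage $i$ it first uses countable prime avoidance to pick $y_i\in Q$ with $y_i\notin P$ for every $P\in C_{i-1}$, and then uses Lemma~\ref{heitmannlemma2} with $I=M$ to choose $\alpha_i\in M$ so that $\tilde{x}_i := x_i+\alpha_i y_i$ is transcendental modulo every $P\in C_{i-1}$. The point is that $\tilde{x}_i$ is adjusted, not decomposed: a short unit calculation (using that $1+\alpha_i\beta$ is a unit because $\alpha_i\in M$) shows that $(\tilde{x}_1,\ldots,\tilde{x}_i,x_{i+1},\ldots,x_n)=Q$ at every step, so after $n$ steps the N-subring $R_n$ contains a generating set for $Q$. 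Once you replace your decomposition trick with this modification-of-generators argument, the rest of your proof goes through unchanged.
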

\begin{proof}
Let $R_0$ be the prime subring of $T$ localized at its intersection with $M$, and let $C_0=\{P\in \Spec T ~|~ P \in \Ass(T/rT), 0\neq r\in R_0\}\cup \Ass T$. Note that by Lemma \ref{simplifyconditions}, there exists $Q\in \Spec T$ such that $\dim(T/Q)=1$, $\height Q + \dim(T/Q)<\dim T$, and $\dep T_Q >1$.  We first claim that $Q \notin \Ass(T/tT)$ for every $t\in T$ that is not a zero divisor. If $Q\in\Ass(T/tT)$ for some $t\in T$ that is not a zero divisor, then the corollary to Theorem 6.2 in \cite{matsumura1989commutative} gives $Q T_Q \in \Ass(T_Q/tT_Q)$. This implies that $T_Q/tT_Q$ consists of only units and zero divisors. Therefore, $t$ is a maximal regular sequence of $T_Q$. Thus, $\dep T_Q =1$, which contradicts our assumption and establishes the claim. Furthermore, if $M\in \Ass(T/tT)$ for any $t\in T$ that is not a zero divisor, then $\dep T=1$, a contradiction. Therefore, $M\notin \Ass(T/tT)$ for every $t\in T$ that is not a zero divisor and $M$ is the only prime ideal strictly containing $Q$. By the above argument, we have that $Q \not\subseteq P$ for all $P\in C_0$. Similarly, $M \not\subseteq P$ for all $P \in C_0$. Since $|R_0|\leq \aleph_0$, we have that $|C_0|\leq \aleph_0$. Then the ``countable prime avoidance lemma" \cite[Corollary 2.2]{countableprimeavoidance} gives that there exists $y_1 \in Q$ such that $y_1 \notin P$ for all $P \in C_0$. As $Q$ is finitely generated, let $Q=(x_1,\dots,x_n)$.

Next, we create a chain of N-subrings $R_0 \subsetneq R_1 \subsetneq \dots \subsetneq R_n$ so that the resulting ring, $R_n$, contains a generating set for $Q$. Note that, as in the proof of Theorem 8 of \cite{heitmann1993characterization}, $R_0$ is an N-subring. To construct our chain, at each step we replace $x_i$ with an appropriate $\tilde{x}_i$ so that $R_i=R_{i-1}[\tilde{x}_i]_{(M\cap R_{i-1}[\tilde{x}_i])}$ is an N-subring by Lemma \ref{loepp11}. Beginning with $R_0$, we find $\tilde{x}_1 = x_1+\alpha_1 y_1$ with $\alpha_1\in M$ so that $\tilde{x}_1+P$ is transcendental over $R_0/(P\cap R_0)$ as an element of $T/P$ for every $P \in C_0$. To find an appropriate $\alpha_1$, we follow an argument similar to that in Lemma 4 of \cite{heitmann1993characterization}. First, fix some $P \in C_0$ and consider $x_1+ty_1+P$ for some $t\in T$. We have $|R_0/(P\cap R_0)| \leq |R_0|$ and so the algebraic closure of $R_0/(P\cap R_0)$ in $T/P$ is countable. By Lemma 2.3 in \cite{charters2004semilocal} we have that $T/P$ is uncountable. Note that each choice of $t+P$ gives a different $x_1+ty_1+P$ since $y_1 \notin P$. So, for all but at most countably many choices of $t+P$, the image of $x_1+ty_1$ in $T/P$ will be transcendental over $R_0/(P\cap R_0)$. Let $D_{(P)}\subsetneq T$ be a full set of coset representatives of $T/P$ that make $x_1+ty_1+P$ algebraic over $R_0/(P\cap R_0)$. Let $D_0 = \ds \bigcup_{P\in C_0} D_{(P)}$. Then $|D_0|\leq \aleph_0$ since $|C_0| \leq \aleph_0$ and $|D_{(P)}|\leq \aleph_0$ for every $P\in C_0$. We can now apply Lemma \ref{heitmannlemma2} with $I=M$ to find $\alpha_1 \in M$ such that $\tilde{x}_1+P=x_1+\alpha_1 y_1+P$ is transcendental over $R_0/(P\cap R_0)$ for every $P\in C_0$. Then by Lemma \ref{loepp11}, $R_1 = R_0[\tilde{x}_1]_{(M\cap R_0[\tilde{x}_1])}$ is a countable N-subring containing $R_0$.

We now claim that $Q=(\tilde{x}_1, x_2, \dots, x_n)$. This can be seen by writing $y_1\in Q$ as $y_1=\beta_{1,1}x_1+\dots+\beta_{1,n}x_n$ for some $\beta_{1,i} \in T$. Then clearly $\tilde{x}_1\in Q$ since $x_1,y_1\in Q$ and we have
\[\tilde{x}_1=x_1+\alpha_1y_1=(1+\alpha_1\beta_{1,1})x_1+\alpha_1\beta_{1,2}x_2+\dots+\alpha_1\beta_{1,n}x_n.
\]
Rearranging gives
\[x_1 = (1+\alpha_1\beta_{1,1})^{-1}(\tilde{x}_1-\alpha_1\beta_{1,2}x_2-\dots-\alpha_1\beta_{1,n}x_n) \,\in \, (\tilde{x}_1, x_2, \dots, x_n)
\]
where $(1+\alpha_1\beta_{1,1})$ is a unit because $\alpha_1 \in M$. Thus, we can replace $x_1$ with $\tilde{x}_1$ in our generating set for $Q$.

To create $R_2$, let $C_1=\{P\in \Spec T ~|~ P \in \Ass(T/rT), 0\neq r\in R_1\}\cup \Ass T$. Then $Q\not\subseteq P$ for all $P\in C_1$. Then $|C_1|\leq \aleph_0$, so again by the ``countable prime avoidance lemma" in \cite{countableprimeavoidance}, we can find $y_2 \in Q$ such that $y_2 \notin P$ for all $P\in C_1$. Let $D_1=\ds \bigcup_{P\in C_1} D_{(P)}$ where $D_{(P)}\subsetneq T$ is a full set of coset representatives of $T/P$ that make $x_2+ty_2+P$ algebraic over $R_1/(P\cap R_1)$ for every $P \in C_1$. Then using Lemma \ref{heitmannlemma2} with $I=M$, there exists $\alpha_2 \in M$ such that $x_2+\alpha_2y_2+P$ is transcendental over $R_1/(P\cap R_1)$ for every $P \in C_1$ as an element of $T/P$. Let $\tilde{x}_2=x_2+\alpha_2 y_2$. Then $R_2 = R_1[\tilde{x}_2]_{(M\cap R_1[\tilde{x}_2])}$ is an N-subring by Lemma \ref{loepp11} and we have $Q=(\tilde{x}_1, \tilde{x}_2,x_3, \dots, x_n)$ by a similar argument as above by writing $y_2=\beta_{2,1}\tilde{x}_1+\beta_{2,2}x_2+\dots+\beta_{2,n}x_n$ to show that $x_2 \in (\tilde{x}_1, \tilde{x}_2, x_3, \dots, x_n)$. Repeating the above process for each $i=3,\dots,n$ we obtain a chain of N-subrings $R_0 \subsetneq R_1 \subsetneq \dots \subsetneq R_n$ and have $Q=(\tilde{x}_1, \tilde{x}_2, \dots, \tilde{x}_n)$. By our construction, each $\tilde{x}_i\in R_n$, so $R_n$ contains a generating set for $Q$.

In the proof of Theorem 8 in \cite{heitmann1993characterization}, Heitmann starts with a complete local ring $(T,M)$ such that no integer of $T$ is a zero divisor and $\dep T>1$.  He then takes the $N$-subring $R_0$, which, recall, is a localization of the prime subring of $T$, and constructs a local UFD containing $R_0$, whose completion is $T$. Now, to complete our construction of $A$, follow the proof of Theorem 8 in \cite{heitmann1993characterization} replacing $R_0$ with the $N$-subring $R_n$ to obtain a local UFD, $A$, such that $A$ contains $R_n$ and $\widehat{A}\cong T$.

Finally, we show that this $A$ is noncatenary. Since $R_n$ contains a generating set for $Q$ and $R_n\subsetneq A$, we have that $(Q \cap A)T = Q$. We use this and the fact that $\dim(T/Q)=1$ to show that $\dim(A/(Q\cap A))=1$. Suppose $P'$ is a prime ideal of $A$ such that $Q\cap A \subsetneq P'$. Then we have $(Q\cap A)T=Q \subsetneq P'T$. This means that the only prime ideal of $T$ that contains $P'T$ is $M$. Then $\dim(T/P'T)=0$, which implies that $\dim(A/P')=0$ since $\widehat{A/P'}=T/P'T$. It follows that $P' = M\cap A$. Thus, $\dim(A/(Q\cap A))=1$. As $\height(Q\cap A)\leq\height Q$, we have $\height(Q\cap A) + \dim(A/(Q\cap A)) \leq \height Q +\dim(T/Q) < \dim T=\dim A$. Therefore, $A$ is noncatenary.
\end{proof}

We are now prepared to prove the main theorem of this section.

\begin{theorem}\label{iffufd}
Let $(T,M)$ be a complete local ring. Then $T$ is the completion of a noncatenary local UFD if and only if the following conditions hold:
\begin{enumerate}
\item[(i)] No integer of $T$ is a zero divisor,
\item[(ii)] $\dep T>1$, and
\item[(iii)] There exists $P\in \Min T$ such that $2 < \dim(T/P) < \dim T$.
\end{enumerate}
\end{theorem}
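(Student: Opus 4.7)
The forward direction---that conditions (i)--(iii) are sufficient---is essentially Lemma \ref{backwardufd}, since by Lemma \ref{simplifyconditions} condition (iii) combined with the depth assumption (ii) is equivalent to the hypothesis of that lemma.

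For the reverse direction, suppose $T = \widehat{A}$ where $A$ is a noncatenary local UFD. I would first observe that $\dim A \geq 3$: any local domain of dimension at most $2$ is catenary, so noncatenarity forces $\dim A \geq 3$. In particular $T$ is neither a field nor a discrete valuation ring, so Theorem \ref{heitmann} yields conditions (i) and (ii). For condition (iii), I would mimic the closing argument of Theorem \ref{iff}: noncatenarity of the local domain $A$ supplies (by extending a witness pair of distinct saturated chains at both ends) two saturated chains from $(0)$ to $M \cap A$ of different lengths---one of length $n := \dim A$ and another of some length $m < n$. Applying Lemma \ref{catoct} to the chain of length $m$ produces $P \in \Min T$ with $\dim(T/P) = m$, so it remains to show that $m > 2$.

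The main obstacle, and the only step where the UFD hypothesis is used in an essential way, is ruling out $m = 2$. The case $m = 1$ is trivial: it would force $\dim A = 1$. For $m = 2$, suppose for contradiction that $(0) \subsetneq Q \subsetneq M \cap A$ is saturated. Then $\height Q = 1$, and because $A$ is a UFD we have $Q = (p)$ for a prime element $p \in A$. Saturation of $(p) \subsetneq M \cap A$ means no prime of $A$ lies strictly between them, so $\dim(A/(p)) = 1$, and passing to completions gives $\dim(T/pT) = \dim \widehat{A/(p)} = 1$. On the other hand, Krull's generalized height theorem applied to the principal ideal $pT \subsetneq T$ in the local Noetherian ring $T$ yields $\dim(T/pT) \geq \dim T - 1 = n - 1 \geq 2$, contradicting $\dim(T/pT) = 1$. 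Hence $m \geq 3$, and the corresponding $P \in \Min T$ satisfies $2 < \dim(T/P) < \dim T$, establishing (iii).
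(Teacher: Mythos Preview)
Your argument is correct. The sufficiency direction matches the paper exactly, and for necessity your overall structure---invoking Theorem~\ref{heitmann} for (i)--(ii), finding a short saturated maximal chain, and applying Lemma~\ref{catoct}---is the same as the paper's. The difference lies in how you rule out $m=2$.

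The paper argues entirely inside $A$: writing $Q=(a)$ and choosing $b\in (M\cap A)\setminus Q$, it takes a minimal prime $Q'$ over $(a,b)$, so that $\height Q'\leq 2$ by Krull's generalized principal ideal theorem; since the paper has already asserted $\dim A>3$ (using the fact that local UFDs of dimension at most three are catenary), $Q'$ lies strictly between $Q$ and $M\cap A$, contradicting saturation. Your argument instead passes to $T$: from $\dim(A/(p))=1$ you get $\dim(T/pT)=1$, while the standard dimension inequality $\dim(T/pT)\geq\dim T-1$ (a consequence of the system-of-parameters characterization of dimension, though not literally Krull's height theorem as you name it) forces $\dim T\leq 2$, contradicting $n\geq 3$. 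Your route has the modest advantage that it needs only $\dim A\geq 3$, which follows from the elementary fact that two-dimensional local domains are catenary, whereas the paper's version appeals to the slightly less immediate fact that three-dimensional local UFDs are catenary. Both arguments use the UFD hypothesis in the same essential way, namely that the height-one prime $Q$ is principal.
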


Note that conditions (i), (ii), and (iii) immediately imply that $\dim T > 3$ and that conditions (i), (ii), and (iii) of Theorem \ref{iff} hold.

\begin{proof}
Lemma \ref{backwardufd} gives us that conditions (i), (ii), and (iii) are sufficient. We now prove that they are necessary.

Suppose $T$ is the completion of a noncatenary local UFD, $A$. Then $\dim A=n > 3$ since all local UFDs of dimension three or less are catenary. By Theorem 8 in \cite{heitmann1993characterization}, $T$ satisfies conditions (i) and (ii). Therefore, we need only show that $T$ contains a minimal prime ideal $P$ with $2<\dim(T/P)<\dim T=n$. Since $A$ is noncatenary, there exists a saturated chain of prime ideals in $A$ from $(0)$ to $M\cap A$, call it $\mathcal{C}_A$, of length $m<n$. We claim that $m > 2$. Note that $m\neq 1$ because $(0)\subsetneq M\cap A$ is not a saturated chain in $A$. So, suppose $m=2$. Then $\mathcal{C}_A$ is given by $(0)\subsetneq Q\subsetneq M\cap A$. Since $\mathcal{C}_A$ is saturated, $\height Q=1$. Since all height-1 prime ideals of a local UFD are principal, let $a\in A$ such that $Q=aA$. Now let $b\in (M\cap A)\setminus Q$ and $I=aA+bA$. Let $Q'\in \Spec A$ be a minimal prime ideal of $I$. Since $I$ is generated by two elements, Krull's Generalized Principle Ideal Theorem implies that $\height Q' <3$. Then we have $Q\subsetneq Q'\subsetneq M\cap A$ since $\height(M\cap A)=n>3$. This contradicts that $\mathcal{C}_A$ is saturated. Thus, $m>2$ as claimed. Now, by Lemma \ref{catoct}, there exists $P \in \Min T$ such that $2 < \dim(T/P) = m < n$, completing the proof.
\end{proof}

\begin{remark}
To see parallels between the above theorem and the main theorem in Section 2, it is interesting to note that condition (ii) in Theorem \ref{iff} can be replaced with the condition that $\dep T >0$ since $\dim T \geq 2$. Then Theorem \ref{iffufd} is very similar to Theorem \ref{iff} in that the only changes required are for the depth of $T$ and $\dim(T/P)$ to each increase by $1$. $\clubsuit$
\end{remark}

Note that, as a result of this theorem, given any complete local ring $T$ satisfying conditions (i), (ii), and (iii) of Theorem \ref{iffufd}, there exists a noncatenary local UFD, $A$, such that $\widehat{A} \cong T$. This allows us to show the existence of a larger class of noncatenary UFDs than was previously known, as exhibited in the example below.

\begin{example} \label{ufdexample}
Let $T=\dfrac{K\llbracket x, y_1,\dots,y_a ,z_1,\dots,z_b\rrbracket}{(x)\cap (y_1,\dots,y_a)}$, where $K$ is a field, $x,y_1,\dots,y_a,z_1,\dots,z_b$ are indeterminates, and $a$ and $b$ are integers such that $a,b>1$. Let $x,y_1,\dots,y_a,z_1,\dots,z_b$ denote their corresponding images in $T$. Note that $\dim T=a+b>3$. Then $T$ satisfies conditions (i), (ii), and (iii) of Theorem \ref{iffufd} since $\Ass T = \{(x),(y_1,\dots,y_a)\}$, $\dim(T/(y_1,\dots,y_a))=b+1<a+b=\dim T$, and $\dep T > 1$. So, we know there exists a noncatenary local UFD, $A$, such that $\widehat{A}\cong T$.
\end{example}

\subsection{Catenary Local Domains and Local UFDs}

Theorems \ref{iff} and \ref{iffufd} concern the completions of noncatenary rings, however when used in conjunction with Theorem \ref{lech} and Theorem \ref{heitmann}, we also obtain some information regarding completions of catenary local domains and catenary local UFDs. 

\begin{corollary}
Suppose $T$ is a complete local ring such that the following conditions hold:
\begin{enumerate}
\item[(i)] No integer of $T$ is a zero divisor,
\item[(ii)] $\dep T>0$, and
\item[(iii)] For all $Q\in \Min T$, either  $\dim(T/Q)\leq 1$ or $\dim (T/Q)=\dim T$.
\end{enumerate}
Then $T$ is the completion of a catenary local domain.  Moreover, every domain whose completion is $T$ is catenary.
\end{corollary}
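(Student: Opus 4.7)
The plan is to combine Theorem \ref{lech} with the contrapositive of Theorem \ref{iff}. The two claims of the corollary (existence, and that every such domain is catenary) will be handled separately, though both reduce to straightforwardly checking hypotheses.

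For the existence statement, I would first verify that $T$ satisfies the hypotheses of Theorem \ref{lech}. Condition (i) of Lech's theorem is exactly our condition (i). For condition (ii), I would note that $\dep T > 0$ forces both $\dim T \geq 1$ (so $M \neq (0)$) and $M \notin \Ass T$ (since any element of a positive-depth maximal ideal that is regular on $T$ prevents $M$ from being an associated prime by prime avoidance). Thus Theorem \ref{lech} applies and produces a local domain $A$ with $\widehat{A} \cong T$.

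For the moreover statement, I would argue by contradiction. Suppose $A$ is a local domain with $\widehat{A} \cong T$ and $A$ is noncatenary. Then by the forward direction of Theorem \ref{iff}, $T$ must satisfy conditions (i), (ii), and (iii) of that theorem; in particular, there exists $P \in \Min T$ with $1 < \dim(T/P) < \dim T$. But this directly contradicts our hypothesis (iii), which forces every $Q \in \Min T$ to satisfy $\dim(T/Q) \leq 1$ or $\dim(T/Q) = \dim T$. Hence no such noncatenary $A$ can exist, and every local domain with completion $T$ is catenary.

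There is no real obstacle here; the work has already been done in Theorems \ref{lech} and \ref{iff}. The only point requiring any care is verifying that $\dep T > 0$ suffices to give $M \notin \Ass T$ and $M \neq (0)$, both of which are standard. The corollary is essentially just the logical repackaging of Theorem \ref{iff} together with Lech's characterization.
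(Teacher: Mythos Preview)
Your proposal is correct and follows essentially the same approach as the paper: apply Theorem~\ref{lech} (using that $\dep T>0$ gives $M\notin\Ass T$ and $M\neq(0)$) to obtain existence, then invoke the forward direction of Theorem~\ref{iff} to rule out noncatenarity via hypothesis~(iii). The paper's proof is more terse, but the logical structure is identical.
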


\begin{proof}
Since $T$ is a complete local ring which satisfies (i) and (ii), Theorem \ref{lech} implies that there exists a local domain, $A$, such that $\widehat{A}\cong T$. However, by Theorem \ref{iff}, we know that $T$ is not the completion of a noncatenary local domain. Therefore, $A$ must be catenary, and every such $A$ must be catenary.
\end{proof}

\begin{corollary}\label{catenaryufd}
Suppose $T$ is a complete local ring such that the following conditions hold:
\begin{enumerate}
\item[(i)] No integer of $T$ is a zero divisor,
\item[(ii)] $\dep T>1$, and
\item[(iii)] For all $Q\in \Min T$, either  $\dim(T/Q)\leq 2$ or $\dim (T/Q)=\dim T$.
\end{enumerate}
Then $T$ is the completion of a catenary local UFD.  Moreover, every UFD whose completion is $T$ is catenary.
\end{corollary}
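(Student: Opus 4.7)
The plan is to follow the exact template of the preceding corollary, swapping in the UFD versions of the two inputs: Theorem \ref{heitmann} in place of Theorem \ref{lech}, and Theorem \ref{iffufd} in place of Theorem \ref{iff}. Condition (ii) here is strengthened from $\dep T > 0$ to $\dep T > 1$ and condition (iii) is correspondingly relaxed to allow $\dim(T/Q) \leq 2$, which lines up precisely with the thresholds in Theorem \ref{iffufd}.

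First I would establish existence of a local UFD with completion $T$. Theorem \ref{heitmann} requires that $T$ be a field, a DVR, or satisfy $\dep T \geq 2$ together with the condition that no element of the prime subring of $T$ is a zero divisor. Hypothesis (ii) gives $\dep T \geq 2$ since depth is an integer. For the prime subring condition: in characteristic zero the prime subring is $\zz$, whose nonzero elements are precisely the nonzero integers of $T$, so (i) suffices; in characteristic $p$ the prime subring is the field $\mathbb{F}_p$, whose nonzero elements are units and hence never zero divisors. Thus Theorem \ref{heitmann} produces a local UFD $A$ with $\widehat{A}\cong T$.

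Next I would rule out the possibility that any such $A$ is noncatenary by invoking the contrapositive of Theorem \ref{iffufd}. That theorem asserts that $T$ is the completion of a noncatenary local UFD if and only if (i), (ii), and the existence of some $P\in\Min T$ with $2<\dim(T/P)<\dim T$ all hold. Hypothesis (iii) of the corollary directly negates the third condition: every $Q\in\Min T$ has $\dim(T/Q)\leq 2$ or $\dim(T/Q)=\dim T$, so no $P\in\Min T$ lies in the strict band $2<\dim(T/P)<\dim T$. Therefore $T$ is not the completion of any noncatenary local UFD, so the UFD $A$ produced above is catenary, and moreover every UFD whose completion is $T$ must be catenary. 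Both claims of the corollary follow.

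There is no real obstacle here, since the work was already done in proving Theorems \ref{heitmann} and \ref{iffufd}; the only mild subtlety is the characteristic-$p$ case of the prime subring hypothesis in Theorem \ref{heitmann}, which is handled by the trivial observation that $\mathbb{F}_p$ is a field.
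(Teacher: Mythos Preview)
Your proposal is correct and follows essentially the same approach as the paper: invoke Theorem \ref{heitmann} (using (i) and (ii)) to produce a local UFD with completion $T$, then use the contrapositive of Theorem \ref{iffufd} (via (iii)) to conclude that no UFD with completion $T$ can be noncatenary. The paper's proof is terser and does not spell out the prime-subring verification, but the logic is identical.
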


\begin{proof}
Since $T$ is a complete local ring which satisfies (i) and (ii), Theorem \ref{heitmann} implies that there exists a local UFD, $A$, such that $\widehat{A}\cong T$. However, by Theorem \ref{iffufd}, we know that $T$ is not the completion of a noncatenary local UFD. Therefore, $A$ must be catenary, and every such $A$ must be catenary.
\end{proof}

A consequence of these two corollaries is that there exists a class of complete local rings which are the completion of both a noncatenary local domain and a catenary local UFD. 

\begin{corollary}
Suppose $T$ is a complete local ring with $\dim T>3$ such that the following conditions hold:
\begin{enumerate}
\item[(i)] No integer of $T$ is a zero divisor,
\item[(ii)] $\dep T>1$,
\item[(iii)] For all $Q\in \Min T$, either  $\dim(T/Q)\leq 2$ or $\dim (T/Q)=\dim T$, and
\item[(iv)] There exists $P\in \Min T$ such that $\dim (T/P)=2$.
\end{enumerate}
Then $T$ is the completion of a noncatenary local domain and the completion of a catenary local UFD.
\end{corollary}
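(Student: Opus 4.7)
The plan is to observe that this corollary follows almost immediately by combining Theorem \ref{iff} and Corollary \ref{catenaryufd}, so the proof is really a matter of verifying the hypotheses of each against the four assumptions given here. I would split it into two short arguments, one for the noncatenary local domain and one for the catenary local UFD.

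For the noncatenary local domain part, I would invoke Theorem \ref{iff}. Condition (i) of Theorem \ref{iff} is immediate from condition (i) of the corollary. For condition (ii) of Theorem \ref{iff}, namely $M \notin \Ass T$, I would use the standard fact that $\dep T > 0$ forces $M$ to contain a nonzerodivisor, hence $M \notin \Ass T$; this is supplied by our assumption $\dep T > 1$. For condition (iii) of Theorem \ref{iff}, I would point to condition (iv) of the corollary: the prime $P \in \Min T$ with $\dim(T/P) = 2$ satisfies $1 < \dim(T/P) < \dim T$ since $\dim T > 3$. Hence Theorem \ref{iff} applies and $T$ is the completion of a noncatenary local domain.

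For the catenary local UFD part, I would apply Corollary \ref{catenaryufd} directly. Its three hypotheses are exactly conditions (i), (ii), and (iii) of our corollary, with no translation needed. So $T$ is the completion of a catenary local UFD, and in fact every UFD whose completion is $T$ is catenary.

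There is no real obstacle here: the content of the corollary is simply that the two characterizations of the previous subsection are compatible and are simultaneously satisfied by rings exhibiting both a height-jump witness (a minimal prime of dimension exactly $2$) and the ``UFD barrier'' preventing dimension-$2$ minimal primes from producing noncatenarity in the UFD case. The only thing worth flagging when writing this up is to remark (perhaps as a concluding sentence) that such a $T$ is easy to exhibit in view of Example \ref{noncatDomainExample}: taking $T = K\llbracket x,y,z,v\rrbracket /\bigl((x)\cap(y,z)\bigr)$ with $\dim T = 3$ does not satisfy $\dim T > 3$, but an obvious thickening such as $K\llbracket x,y,z,v,w\rrbracket /\bigl((x)\cap(y,z)\bigr)$ fulfills all four conditions.
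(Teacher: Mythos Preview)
Your proposal is correct and follows essentially the same route as the paper: invoke Theorem~\ref{iff} using conditions (i), (ii), (iv) together with $\dim T>3$ for the noncatenary domain, and invoke Corollary~\ref{catenaryufd} using conditions (i), (ii), (iii) for the catenary UFD. Your write-up is actually a bit more careful than the paper's, since you spell out why $\dep T>1$ gives $M\notin\Ass T$ rather than leaving that implicit.
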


\begin{proof}
Since $T$ satisfies conditions (i), (ii), and (iv), by Theorem \ref{iff}, we know that $T$ is the completion of a noncatenary local domain. Since $T$ satisfies conditions (i), (ii), and (iii), Corollary \ref{catenaryufd} implies that $T$ is the completion of a catenary local UFD. Thus, $T$ is the completion of both a noncatenary local domain and a catenary local UFD.
\end{proof}

\section{Noncatenarity of Local Domains and Local UFDs}

As a consequence of Heitmann's main result in \cite{heitmann1979examples}, Noetherian domains can be made to be ``as noncatenary as desired,'' in the sense that, for any natural numbers $m$ and $n$, both greater than one, there exists a ring containing two prime ideals with both a saturated chain of prime ideals of length $m$ and a saturated chain of prime ideals of length $n$ between them. We reprove this result for noncatenary local domains and show that the same can be done for noncatenary local UFDs.

\begin{proposition}\label{catenaritydomain}
Let $m$ and $n$ be positive integers with $1 < m  < n$. Then there exists a noncatenary local domain of dimension $n$ with a saturated chain of prime ideals of length $m$ from $(0)$ to the maximal ideal.
\end{proposition}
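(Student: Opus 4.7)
The plan is to apply Theorem \ref{iff} to a carefully chosen complete local ring with two minimal primes of different residual dimensions. Let $K$ be any field and consider
\[
T \;=\; \frac{K\llbracket x_1, \dots, x_n, y_1, \dots, y_m \rrbracket}{(x_1, \dots, x_n) \cap (y_1, \dots, y_m)}.
\]
Since $I = (x_1,\dots,x_n)$ and $J = (y_1,\dots,y_m)$ are prime in the ambient power series ring $R$ with $\dim(R/I) = m$ and $\dim(R/J) = n$, the ring $T$ is a reduced complete local ring of dimension $n$ with exactly two minimal (hence associated) primes, namely $P_1 = I/(I \cap J)$ and $P_2 = J/(I \cap J)$, satisfying $\dim(T/P_1) = m$ and $\dim(T/P_2) = n$. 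I would verify the hypotheses of Theorem \ref{iff}: no nonzero integer of $T$ lies in $P_1$ or $P_2$ (both are generated by indeterminates), so condition (i) holds because $T$ is reduced; $M \notin \Ass T = \{P_1, P_2\}$; and $P_1$ witnesses (iii) because $1 < m < n$. Theorem \ref{iff} then yields a noncatenary local domain $A$ with $\widehat A \cong T$, so in particular $\dim A = \dim T = n$.

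Next, I would exhibit the required saturated chain of length $m$ in $A$. Applying Lemma \ref{chainconstruction} to $P_1$, I obtain a saturated chain $P_1 \subsetneq Q_1 \subsetneq \dots \subsetneq Q_{m-1} \subsetneq M$ in $T$ in which each $Q_i$ has $P_1$ as its unique minimal prime and satisfies $Q_i \notin \Ass T$. Recall that $A$ is built via Theorem \ref{pippa} with $G = \{P \in \Spec T \mid P \subseteq P_1 \text{ or } P \subseteq P_2\}$. Each $Q_i$ lies outside $G$, since it properly contains $P_1$ and cannot be contained in $P_2$ (else $P_1 \subsetneq P_2$ would contradict minimality). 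By the inclusion-preserving bijection of Remark \ref{1to1}, intersecting with $A$ yields a strictly ascending chain
\[
(0) \;\subsetneq\; Q_1 \cap A \;\subsetneq\; \cdots \;\subsetneq\; Q_{m-1} \cap A \;\subsetneq\; M \cap A
\]
in $A$ of length $m$.

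Finally, I would verify saturation. For any gap between two consecutive primes both lying in the image of $\Spec T \setminus G$, the inclusion-preserving bijection of Remark \ref{1to1} would turn an intermediate prime in $A$ into an intermediate prime in $T \setminus G$, contradicting saturation in $T$; this handles each $Q_i \cap A \subsetneq Q_{i+1} \cap A$ and $Q_{m-1} \cap A \subsetneq M \cap A$. The main obstacle is saturation of $(0) \subsetneq Q_1 \cap A$, because many primes of $T$ strictly below $Q_1$ collapse to $(0)$ under intersection with $A$. A prime $P'$ of $A$ strictly between $(0)$ and $Q_1 \cap A$ would correspond via the bijection to some $Q' \in \Spec T \setminus G$ with $Q' \subsetneq Q_1$; but any such $Q'$ must contain a minimal prime of $T$, and since $P_1$ is the only minimal prime below $Q_1$, saturation of $P_1 \subsetneq Q_1$ forces $Q' = P_1 \in G$, a contradiction. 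This is precisely why Lemma \ref{chainconstruction} was invoked, and once this bottom step is settled the desired chain is saturated and of length $m$, completing the proof.
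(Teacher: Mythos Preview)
Your argument is correct and follows essentially the same approach as the paper. Both proofs choose a reduced complete local ring $T$ with exactly two minimal primes of coheights $m$ and $n$, apply Theorem~\ref{iff} to obtain the domain $A$, and then use the inclusion-preserving bijection of Remark~\ref{1to1} to transport a saturated chain of length $m$ in $T$ down to $A$. The only cosmetic differences are that the paper works with the ring of Example~\ref{ufdexample} (so that $G=\Ass T$ consists of just two primes, one of which is principal) and writes down explicit chains as in Figure~2, whereas you use a different presentation of essentially the same ring and invoke Lemma~\ref{chainconstruction} to produce the chain; your verification of saturation at the bottom step $(0)\subsetneq Q_1\cap A$ is in fact more detailed than the paper's.
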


\begin{proof}
Let $T$ be the complete local ring given in Example \ref{ufdexample} where $a = n-m+1$ and $b = m-1$. Observe that $a + b = \dim T$ and $1 < a < a+b$. Therefore, $T$ satisfies the conditions of Theorem \ref{iff}, and so it is the completion of a noncatenary local domain, A. By the construction of $A$ in the proof of Theorem \ref{iff}, the set $\{P \in \Spec T ~|~ P \cap A = (0)\}=\{(x),(y_1,\dots,y_a)\}=G$ and there is a one-to-one inclusion-preserving correspondence between the nonzero prime ideals of $A$ and the prime ideals of $T$ which are not in $G$. Note that $\dim (T/(x))=a+b=n$ and $\dim (T/(y_1,\dots,y_a))=b+1=m$. Therefore, there exists a saturated chain of prime ideals of $T$ from $(x)$ to $M=(x,y_1,\dots,y_a,z_1,\dots,z_b)$ of length $n$ and a saturated chain of prime ideals of $T$ from $(y_1,\dots,y_a)$ to $M$ of length $m$ (see Figure 2). By the one-to-one correspondence, the intersection map will preserve the lengths of these chains. Therefore, we have found a local domain of dimension $n$ with a saturated chain of length $m$ from $(0)$ to $M\cap A$.
\end{proof}

\begin{figure}[H]
\centering
\begin{tikzpicture} 

\node (l0) at (0,0) {$(x)$};
\node (l1) at (0,1) {$(x,z_1)$};
\node (l2) at (0,2) {$(x, z_1, z_2)$};
\node at (0,3.3) {\vdots};
\node (ltop) at (0,4.5) {$(x,z_1, \dots, z_b, y_1, \dots, y_{a-1})$};

\node (s0) at (5,1.5) {$(y_1, \dots, y_a)$};
\node (s1) at (5,2.5) {$(y_1, \dots, y_a, z_1)$};
\node at (5,3.6) {\vdots};
\node (stop) at (5,4.5) {$(y_1, \dots, y_a, z_1, \dots, z_b)$};

\node (max) at (2.5,5.75) {$M$};
\node (ring) at (2.5, 7) {$T=\dfrac{K\llbracket x, y_1,\dots,y_a ,z_1,\dots,z_b\rrbracket}{(x)\cap (y_1,\dots,y_a)}$};

\draw (l0) -- (l1) -- (l2) -- (0,2.8);
\draw (0,3.6) -- (ltop) -- (max) -- (stop) -- (5,3.8);
\draw (5,3.2) -- (s1) -- (s0);
\end{tikzpicture}
\caption{}
\end{figure}

\begin{proposition}\label{catenarityufd}
Let $m$ and $n$ be positive integers with $2 < m < n$. Then there exists a noncatenary local UFD of dimension $n$ with a saturated chain of prime ideals of length $m$ from $(0)$ to the maximal ideal.
\end{proposition}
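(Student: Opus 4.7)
The plan is to apply Theorem~\ref{iffufd} to the complete local ring from Example~\ref{ufdexample} with carefully chosen parameters, then extract a saturated chain of length~$m$ by tracing the construction in Lemma~\ref{backwardufd}. Specifically, set $a = n - m + 1$ and $b = m - 1$. Since $2 < m < n$, both $a$ and $b$ exceed~$1$, so Example~\ref{ufdexample} applies to
\[
T \;=\; \dfrac{K\llbracket x, y_1, \ldots, y_a, z_1, \ldots, z_b\rrbracket}{(x) \cap (y_1, \ldots, y_a)}.
\]
Then $T$ satisfies conditions (i), (ii), and (iii) of Theorem~\ref{iffufd}, has dimension $a + b = n$, and has a minimal prime $P = (y_1, \ldots, y_a)$ with $\dim(T/P) = b + 1 = m$, where $2 < m < \dim T$. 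By Theorem~\ref{iffufd}, there exists a noncatenary local UFD $A$ with $\widehat{A} \cong T$, and hence $\dim A = n$.

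It remains to produce a saturated chain of length~$m$ in $A$ from $(0)$ to the maximal ideal. I would trace through the proof of Lemma~\ref{backwardufd}: applying Lemma~\ref{simplifyconditions} to the minimal prime $P$ yields a prime $Q \in \Spec T$ with $\dim(T/Q) = 1$ and $\dep T_Q > 1$. Moreover, $Q$ arises from a saturated chain of length~$m$ starting at $P$, so by catenarity of $T$ one has $\height Q = m - 1$. Running the construction of Lemma~\ref{backwardufd} with this particular $Q$ produces $A$ with the additional property that $(Q \cap A)T = Q$. Since $\widehat{A} \cong T$ is faithfully flat over $A$, the standard dimension formula for flat local extensions gives
\[
\height Q \;=\; \height(Q \cap A) + \dim\!\bigl(T_Q / (Q \cap A)T_Q\bigr) \;=\; \height(Q \cap A),
\]
so $\height(Q \cap A) = m - 1$. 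The proof of Lemma~\ref{backwardufd} also establishes $\dim(A/(Q \cap A)) = 1$. Concatenating any saturated chain from $(0)$ to $Q \cap A$ in $A$ (which has length $m - 1$) with the step $Q \cap A \subsetneq M \cap A$ (which is saturated because $\dim(A/(Q \cap A)) = 1$) yields the desired saturated chain of length~$m$.

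The main obstacle will be justifying the equality $\height(Q \cap A) = m - 1$; this rests on the identity $(Q \cap A)T = Q$ guaranteed by the construction in Lemma~\ref{backwardufd}, together with the flat-extension dimension formula. Beyond that, everything is bookkeeping: the dimension count comes from Example~\ref{ufdexample}, Theorem~\ref{iffufd} delivers the noncatenary local UFD, and noncatenarity is automatic once chains of distinct lengths $m < n$ between $(0)$ and $M \cap A$ are exhibited.
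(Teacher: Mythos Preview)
Your proposal is correct and follows essentially the same route as the paper's proof. The only cosmetic difference is that the paper names the prime explicitly as $Q' = (y_1,\ldots,y_a,z_1,\ldots,z_b)$ and reads off $\height Q' = b = m-1$ from the displayed chain, whereas you obtain the prime abstractly by tracing the proof of Lemma~\ref{simplifyconditions} starting from $P=(y_1,\ldots,y_a)$; both lead to the same flat-extension height computation and the same concatenated chain of length~$m$.
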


\begin{proof}
Let $a$, $b$, and $T$ be as in the proof of Proposition \ref{catenaritydomain}. Observe that again $a+b = \dim T$ and we have $2 < a < a+b$. Furthermore, $T$ is exactly as in Example \ref{ufdexample}, so it satisfies the conditions of Theorem \ref{iffufd} and is the completion of a noncatenary local UFD, $A$. Recall that in the proof of Lemma \ref{backwardufd}, we choose a prime ideal $Q'$ of $T$ such that $\dim (T/Q')=1$ and $\height Q'+\dim (T/Q')<\dim T$ and construct $A$ such that $(Q'\cap A)T=Q'$ and $\dim(A/(Q' \cap A)) = 1$. In particular, we choose $Q'=(y_1,\dots, y_a, z_1,\dots, z_b)$, which satisfies the above (see Figure 2), and construct $A$ such that $(Q'\cap A)T=Q'$. We know that $\dim A=\dim T= a+b=n$, and we will show that $\height(Q'\cap A)=\height Q'=b$. From Theorem 15.1 in \cite{matsumura1989commutative}, since completions are faithfully flat extensions, we have that $\height Q'=\height (Q'\cap A)+\dim (T_{Q'}/(Q'\cap A)T_{Q'})$. Since $(Q'\cap A)T=Q'$, we know that $\dim (T_{Q'}/(Q'\cap A)T_{Q'})=0$, so $\height (Q'\cap A)=\height Q'$. Therefore, there exists a saturated chain of prime ideals in $A$ from $(0)$ to $M\cap A$, containing $Q'\cap A$, of length $b+1=m$. 
\end{proof}

Although we show that there is no finite bound on the noncatenarity of a local domain, as a result of Lemma \ref{catoct}, if $A$ is a local domain (or local UFD) such that $\widehat{A}\cong T$, then $A$ can only be ``as noncatenary as $T$ is nonequidimensional.'' In general, however, the converse is not true. In fact, in Example \ref{catDomainExample}, we construct a class of examples of rings which are ``as nonequidimensional as desired,'' but are not the completions of noncatenary local domains.  In other words, for any positive integer $n$, there is a complete local nonequidimensional ring $T$ with $P,Q\in \Min T$ such that $\dim(T/P)-\dim(T/Q)=n$, but every local domain $A$ such that $\widehat{A}\cong T$ must be catenary, but not universally catenary.

\subsection*{Acknowledgements}

We would like to thank Nina Pande for her many helpful conversations, as well as Williams College and the NSF for providing funding (grant \#DMS1659037). 

\bibliography{bib}{}
\bibliographystyle{plain}

\end{document}